\newtheorem{theorem}{Theorem}[section]
\newtheorem{lemma}[theorem]{Lemma}
\newtheorem{corollary}[theorem]{Corollary}
\newtheorem{proposition}[theorem]{Proposition}
\newtheorem{theorem-B}{Theorem}
\newtheorem{lemma-B}[theorem-B]{Lemma}
\newtheorem{corollary-B}[theorem-B]{Corollary}
\newtheorem{proposition-B}[theorem-B]{Proposition}
\theoremstyle{definition}
\newtheorem{remark}[theorem]{Remark}
\newtheorem{subsec}[theorem]{}
\numberwithin{equation}{section}
\DeclareMathAlphabet\mathbfcal{OMS}{cmsy}{b}{n}
\DeclareFontFamily{OT1}{pzc}{}
\DeclareFontShape{OT1}{pzc}{m}{it}{<->s*[1.10] pzcmi7t}{}
\DeclareMathAlphabet{\mathscr}{OT1}{pzc}{m}{it}
\begin{document}

\hfuzz=13pt
\title[Support $\tau$-tilting modules and semibricks]{Support $\tau$-tilting modules and semibricks \\ over  group graded algebras}
\author[S. Breaz]{Simion Breaz}
\author[A. Marcus]{Andrei Marcus}
\author[G. C. Modoi]{George Ciprian Modoi}
\address{\parbox{\linewidth}{Babe\cb{s}-Bolyai University, Faculty of Mathematics and Computer Science, Department of Mathematics, 1 Kog\u{a}lniceanu St., 400084, Cluj-Napoca, Romania}}
\email{bodo@math.ubbcluj.ro}
\email{marcus@math.ubbcluj.ro}
\email{cmodoi@math.ubbcluj.ro}

\subjclass[2020]{16W50, 16D90}
\keywords{Group graded algebras, Frobenius and separable functors, $2$-term silting complex, support $\tau$-tilting module, semibrick, wide subcategory.}

\begin{abstract} We consider a finite dimensional strongly $G$-graded algebra $A$ with { self-injective} $1$-component $B$, and in our main result we prove that the induction from $B$ to $A$ of a basic support $\tau$-tilting pair of $B$-modules is a support $\tau$-tilting pair $(M,P)$ of $A$-modules if and only if $M$ is $G$-invariant. A similar statement holds for the restriction from $A$ to $B$, so our results may be viewed as  Clifford and Maschke type theorems for $2$-term silting complexes. We also give applications to semibricks and the associated wide subcategories.
\end{abstract}

\maketitle

\section{Introduction}

{ This article is motivated by the work done by R.~Koshio and Y.~Kozakai in \cite{art:KK2020} and \cite{art:KK2021}. They consider a block algebra  $B$ of a normal subgroup $G$ of a finite group $\tilde G$, and under some strong assumptions, they obtain correspondences between the $2$-term tilting complexes over $B$, and the $2$-term tilting complexes over $\tilde B$, where $\tilde B$ is a block algebra of $\tilde G$ covering $B$. Similar correspondences are obtained between sets of isomorphism classes of associated objects: support $\tau$-tilting modules, semibricks, simple minded collections. Their results rely on the study of the behaviour of these objects under the functor $\tilde B\cdot \mathrm{Ind}_G^{\tilde G}$. }

{ We show here that the assumptions of \cite{art:KK2020} and \cite{art:KK2021} can be significantly relaxed. We build upon the ideas and results of \cite{art:B2020}  on induction of silting complexes, and of  \cite{art:M2003},  \cite{art:M2005} and  \cite{art:MPan2019}, where group graded tilting complexes  are constructed from Rickard's simple minded collections.}

{We consider a finite dimensional strongly $G$-graded algebra $A$ with $1$-component $B$ over an arbitrary field $k$. In this situation, the induction functor $\mathrm{Ind}^A_B=A\otimes_B-$ and the restriction of scalars $\mathrm{Res}^A_B$ from $A$ to $B$ form a Frobenius (biadjoint) pair. Note that $\mathrm{Ind}^A_B$ is always separable, while  $\mathrm{Res}^A_B$ is separable when the characteristic of $k$ does not divide the order $|G|$ of $G$. The crucial observation made in Theorem \ref{t:gr-cover} is that { if $B$ is self-injective, then} $\mathrm{Ind}^A_B$ commutes with projective covers, and hence takes a minimal projective presentation to a minimal projective presentation. E.~C.~Dade \cite{art:Dade1984}  proved a similar result for $\mathrm{Res}^A_B$  when  $\mathrm{char}\,k \nmid |G|$, and we also simplify his proof in that case, avoiding the use of Clifford extensions.}

{This has a number of consequences which allow us to show in Theorem  \ref{t:main-ind-2-silt} that if $M$ is a support $\tau$-tilting $B$-module, then $\mathrm{Ind}^A_BM$ is a support $\tau$-tilting $A$-module if and only if $M$ is $G$-invariant. A similar result involving $\mathrm{Res}^A_B$ also holds, and we point out that there is a general phenomenon which can be expressed in terms of Frobenius and separable functors that will be discussed in a subsequent paper. Nevertheless, several of our results stated for strongly group graded algebras do not hold for much more general Frobenius ring extensions.}

{The outline of the paper is as follows.  In Section  \ref{s:tau-tilt-2-silt} we present the needed results on $2$-term silting complexes, support $\tau$-tilting modules and semibricks. 
}

{Section  \ref{sec:graded} contains results on projective covers and Auslander-Reiten translations. Our main result here says that for any $B$-module $Y$, we have $\mathrm{Ind}^A_B(\tau_B (Y)) \simeq \tau_A(\mathrm{Ind}^A_B Y)$, this being an important ingredient in the proof of Theorem  \ref{t:main-ind-2-silt}. Moreover, if $\mathrm{char}\,k \nmid |G|$ then  $\mathrm{Res}^A_B(\tau_A (X)) \simeq \tau_B(\mathrm{Res}^A_B X)$  for any $A$-module $X$. We also present in this section the facts from Clifford theory which will be used in the subsequent sections.  In Section \ref{sec:ind-res-tau} we prove Theorem   \ref{t:main-ind-2-silt} and also its counterpart for $\mathrm{Res}^A_B$. }

{In Section \ref{s:semibrick}  we study left finite semibricks and functorially finite torsion classes over $B$ respectively $A$. Theorem \ref{f:bricks}, which  generalizes results of Koshio and Kozakai, is an application of the so-called stable Clifford correspondence and of the results of the preceding sections. We explain in detail the relationship with the results from \cite{art:KK2020}, \cite{art:K2022} and \cite{art:KK2021}. }

In Section \ref{sec:wide} we apply our framework to Asai's results from \cite[Sections 2.3 and 3.3]{art:Asai2020} on the realization of certain wide subcategories as module categories. We show here that we get strongly $G$-graded endomorphism algebras, and that the category equivalences given in \cite[Theorems 2.27 and 3.15]{art:Asai2020} commute with induction and restriction.

Note that we have at our disposal the induction and restriction functors, and also silting and cosilting complexes. Therefore, many of our statements may have four variants, but our main concern here is the induction of  $2$-silting complexes.

{All rings in this paper are associative with identity and all modules are left (unless otherwise specified) unital and finitely generated.  Our general assumptions and conventions are standard, and  tend to follow \cite{art:AIR2014} and \cite{art:Asai2020}. The field $k$ is arbitrary, and $k$-algebras are finite dimensional. For a module $X$ over the $k$-algebra $A$, $X^*$ is the $A$-dual of $X$, $\mathrm{D}X$ is the $k$-dual, while $\mathrm{P}$, $\Omega$, $\nu$, $\mathrm{Tr}$, $\tau$ denote the projective cover, the syzygy functor, the Nakayama functor, the transpose, and the Auslander-Reiten translation, respectively. For a full subcategory $\mathcal{C}$ of $\mathrm{mod}\,A$, the notations $\mathrm{Fac}\,\mathcal{C}$, $\mathrm{Sub}\,\mathcal{C}$, $\mathrm{Filt}\,\mathcal{C}$, $\mathrm{T}(\mathcal{C})$ are also standard. For basic concepts and results on representation theory, Frobenius and separable functors, and group graded algebras, we refer the reader to \cite{book:ARS}, \cite{book:ASS}, and \cite{book:M1999}.}

\section{Preliminaries on support $\tau$-tilting modules} \label{s:tau-tilt-2-silt}

As we have mentioned in the Introduction, our main objects of study are the $2$-silting complexes and their corresponding support $\tau$-tilting modules and left finite semibricks, and also the dual versions of them. We recall from \cite{art:AIR2014} and \cite{art:Asai2020} the main facts which will be used we here.

\begin{subsec} \label{ss:notations} Let $B$ be a finite dimensional $k$-algebra. By \cite[Theorem 3.2]{art:AIR2014}, there is a bijection between the set $2$-$\mathrm{silt}\,B$ of isomorphism classes  of 2-term silting complexes of projective $B$-modules  and the set $\mathrm{s}\tau$-$\mathrm{tilt}\,B$ of isomorphism classes  of support $\tau$-tilting $B$-modules.

Let $\mathbf{P}\in \mathcal{K}^{\mathrm{b}}(\mathrm{proj}\,B)$ be in $2\textrm{-silt}\,B$, and let $M=\mathrm{H}^0(\mathbf{P})$  be the support $\tau$-tilting module corresponding to $\mathbf{P}$.  There is  an idempotent $e \in B$ such that $M$ is a $\tau$-tilting $B/\langle e\rangle$-module, where $\langle e\rangle=BeB$, and the composition factors of $M$ are exactly the simple submodules of $(1-e)B/(1-e)\mathrm{rad}B$.

We will assume that $M$ is basic and that the complex
\[\mathbf{P}=(P_1\oplus P\overset{[f\, 0]}\longrightarrow P_0)\] is obtained via  a minimal projective presentation
\[P_1\overset{f}\longrightarrow P_0\to M\to 0\] of $M$. Recall that here $P=Be$, and $(M,P)$ is a support $\tau$-tilting pair for $B$.

Denote \[D=\mathrm{End}_{\mathcal{K}^{\mathrm{b}}(\mathrm{proj}\,B)}(\mathbf{P})^{\textrm{op}}\qquad \textrm{ and }\qquad F=\mathrm{End}_B(M)^{\textrm{op}}.\]

As in \cite[Theorem 2.3]{art:Asai2020}, let \[S=M/L\in \mathrm{f}_\mathrm{L}\textrm{-sbrick}\,B\] be the left-finite semibrick corresponding bijectively to $M$, where \[L=R(M,M)=\mathrm{rad}_FM.\] Note that if $M$ is basic, then $S$ is also basic.
\end{subsec}

\begin{subsec}  Consider the Nakayama functor \[\nu:\mathcal{K}^{\mathrm{b}}(\mathrm{proj}\,B)\to \mathcal{K}^{\mathrm{b}}(\mathrm{inj}\,B)\] and let  $\mathbf{P}':=\nu\mathbf{P}$ Then $\nu\mathbf{P}\in 2\textrm{-}\mathrm{cosilt}\,B$.

Let $M'=H^{-1}(\nu\mathbf{P})\in \mathrm{s}\tau^{-1}\textrm{-}\mathrm{tilt}\,B$ and $S'=\mathrm{soc}_B(M)\in \mathrm{f}_\mathrm{R}\textrm{-}\mathrm{sbrick}\,B$. Denote also \[D':=\mathrm{End}_{\mathcal{K}^{\mathrm{b}}(\mathrm{inj}\,B)}(\nu\mathbf{P})^{\textrm{op}}\qquad \textrm{ and }\qquad F'=\mathrm{End}_B(M')^{\textrm{op}}.\]
Then $\nu$ induces an isomorphism between $D$ and $D'$.
\end{subsec}

\begin{subsec} We consider the decomposition $\mathbf{P}=\bigoplus_{i=1}^nP_i$, where $P_i\in \mathcal{K}^{\mathrm{b}}(\mathrm{proj}\,B)$ are indecomposable. We know that $n=|B|$. Then we have the decomposition $\mathbf{P}'=\bigoplus_{i=1}^nP'_i$, where $P'_i=\nu P_i\in \mathcal{K}^{\mathrm{b}}(\mathrm{inj}\,B)$ are indecomposable.

Let $M_i=H^0(P_i)$, hence $M=\bigoplus_{i=1}^nM_i$. Let $L_i=\sum_{f\in\mathrm{rad}_A(M,M_i)}\mathrm{Im}f$ and $S_i=M_i/L_i$, for $i=1,\dots,n$. We have $L=\bigoplus_{i=1}^nL_i$ and $S=\bigoplus_{i=1}^nS_i$.

Let $M'_i:=H^{-1}(\nu P_i)$ and $S'_i=\bigcap_{f\in\mathrm{rad}_A(M'_i,M')}\mathrm{Ker}f$. We have $M'=\bigoplus_{i=1}^nM'_i$ and $S'=\bigoplus_{i=1}^nS'_i$.
\end{subsec}

\begin{subsec} \label{ss:indices} By the results of \cite[Section 3.3]{art:Asai2020}, we may assume that there are natural numbers $1\le k\le l\le m\le n$, and that we may chose the indices such that the following properties hold.

\begin{enumerate}
\item  For $1\le i\le m$ we have that $M_i\neq 0$ is an indecomposable $\tau$-rigid $B$-module, while for $k+1\le i\le n$ we have that $M'_i\neq 0$ is an indecomposable $\tau^{-1}$-rigid $B$-module.
\item $M_i \notin \mathrm{Fac}(M/M_i)$ if and only if $1\le i\le l$. In this case, $S_i\simeq M_i/R(M_i,M_i)\in \mathrm{f}_\mathrm{L}\textrm{-}\mathrm{brick}\,B$, and $S'_i=0$.
\item For $l+1\le i\le n$ we have that $S'_i\in \mathrm{f}_\mathrm{R}\textrm{-}\mathrm{brick}\,B$, and $S_i=0$.
\item For $1\le i\le k$ we have that  $M_i$ is projective and $M'_i=0$, while for $m+1\le i\le n$ we have that $M'_i$ is injective and $M_i=0$.
\end{enumerate}
\end{subsec}

\begin{subsec} \label{s:bijections} In addition to the above mentioned bijections between the sets $2$-$\mathrm{silt}\,B$, $\mathrm{s}\tau$-$\mathrm{tilt}\,B$ and $\mathrm{f}_\mathrm{L}\textrm{-}\mathrm{sbrick}\,B$, by \cite[Theorem 2.7]{art:AIR2014} and \cite[Proposition 2.24 and Proposition 3.5]{art:Asai2020} these sets are also in bijection with:
\begin{enumerate}
\item The set $\mathrm{f}\textrm{-}\mathrm{tors}\,B$ of functorially finite torsion classes $\mathcal{T}$ in $\mathrm{mod}\,B$.
\item The set $\mathrm{f}_\mathrm{L}\textrm{-}\mathrm{wide}\,B$  of left finite wide subcategories $\mathcal{W}$ of $\mathrm{mod}\,B$.
\item The set $\mathrm{int}\textrm{-}\mathrm{t}\textrm{-}\mathrm{str}\,B$ of intermediate t-structures with length heart $\mathcal{H}$ in $\mathcal{D}^{\mathrm{b}}(\mathrm{mod}\,B)$.
\end{enumerate}
Denote by $\mathcal{T}$ the functorially finite torsion class,  by $\mathcal{W}$ left finite wide subcategory, and by $\mathcal{H}$ the heart of the intermediate t-structure corresponding to $M$. We know that $\mathcal{T}=\mathrm{Fac}\,M=\mathrm{T}(S)=\mathrm{Filt}(\mathrm{Fac}\,S)$, and  $\mathcal{W}=\mathrm{Filt}\,S$. Under this bijection, $S$ becomes a semisimple object in $\mathcal{W}$ and also in the abelian category $\mathcal{H}$.
\end{subsec}

\begin{subsec}\label{s:stable}
{ In the following we will prove a slight generalization of \cite[Theorem 2.2]{art:B2020}, also adapted to the case of $\tau$-tilting modules, that will be used in the proofs of the main results of this paper.

{In order to do it, let us recall that if $\mathbf{P}: P_{1}\overset{f}\to P_0$ is a complex of projective $B$-modules that is concentrated in the degrees $0$ and $1$, we can associate to it, as in \cite{art:AMV2015}, the class $\mathcal{D}_{\mathbf{P}}=\{K\in B\textrm{-}\mathrm{mod}\mid \mathrm{Hom}_{\mathbf{D}^b(A)}(\mathbf{P}, K[1])=0\}$. We know from \cite{art:AMV2015} that $\mathbf{P}$ is silting if and only if $\mathcal{D}_{\mathbf{P}}=\mathrm{Fac}(H^0(\mathbf{P}))$.
}

Suppose that $\mathcal{F}:B\textrm{-}\mathrm{mod}\to A\textrm{-}\mathrm{mod}:\mathcal{G}$ is a pair of functors such that $\mathcal{F}$ is a left adjoint of $\mathcal{G}$. We say that a module $M$ is \textit{$\mathcal{F}$-stable} if  $\mathcal{GF}(M)\in\mathrm{add}\,M.$


\begin{proposition}\label{prop:var-ext-2-silt}
Suppose that $A$ and $B$ are $k$-algebras, and that $\mathcal{F}:B\textrm{-}\mathrm{mod}\to A\textrm{-}\mathrm{mod}:\mathcal{G}$ are functors such that

\noindent {\rm(a)} $\mathcal{F}$ is a left adjoint for $\mathcal{G}$,

\noindent {\rm(b)} for every $K\in A\textrm{-}\mathrm{mod}$ the canonical morphism $\mathcal{FG}(K)\to K$ is surjective, and

\noindent {\rm(c)} $\mathcal{F}$ preserves the projective property (or, equivalently, $\mathcal{G}$ is exact).

\noindent If $\mathbf{P}$ is a $2$-silting complex with $H^0(\mathbf{P})=M$, the following are equivalent:

{\rm(1)}  $\mathcal{F}(\mathbf{P})\in 2\textrm{-}\mathrm{silt}\,A$;

{\rm(2)} $\mathcal{GF}(M)\in\mathrm{Fac}\,M.$

\noindent Moreover, if

\noindent {\rm(d)} $\tau \mathcal{GF}(M)\cong \mathcal{G}(\tau \mathcal{F}( M))$,
\noindent the above conditions are equivalent to

{\rm(3)} $M$ is $\mathcal{F}$-stable.
\end{proposition}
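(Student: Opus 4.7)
The plan is to route everything through the characterization $\mathbf{P}\in 2\textrm{-}\mathrm{silt}\,B \Leftrightarrow \mathcal{D}_{\mathbf{P}}=\mathrm{Fac}(H^0(\mathbf{P}))$ recalled in \ref{s:stable}. Assumption (c) makes $\mathcal{F}(\mathbf{P})$ a 2-term complex of projective $A$-modules, and right exactness of $\mathcal{F}$ gives $H^0(\mathcal{F}(\mathbf{P}))=\mathcal{F}(M)$. Exactness of $\mathcal{G}$ lets the adjunction $\mathcal{F}\dashv\mathcal{G}$ descend to the derived level, so for each $K\in A\textrm{-}\mathrm{mod}$ one gets
\[\mathrm{Hom}_{\mathbf{D}^b(A)}(\mathcal{F}(\mathbf{P}),K[1]) \cong \mathrm{Hom}_{\mathbf{D}^b(B)}(\mathbf{P},\mathcal{G}(K)[1]).\]
Since $\mathbf{P}$ is already silting, this yields the key equivalence $K\in\mathcal{D}_{\mathcal{F}(\mathbf{P})}$ iff $\mathcal{G}(K)\in\mathrm{Fac}(M)$, which drives the whole argument.

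For (1)$\Rightarrow$(2), specialize the key equivalence to $K=\mathcal{F}(M)$: since $\mathcal{F}(M)\in\mathrm{Fac}(\mathcal{F}(M))=\mathcal{D}_{\mathcal{F}(\mathbf{P})}$, one reads off $\mathcal{GF}(M)\in\mathrm{Fac}(M)$. The converse amounts to showing the equality $\mathcal{D}_{\mathcal{F}(\mathbf{P})}=\mathrm{Fac}(\mathcal{F}(M))$ by two inclusions. The inclusion $\mathrm{Fac}(\mathcal{F}(M))\subseteq \mathcal{D}_{\mathcal{F}(\mathbf{P})}$ follows by pushing a surjection $(\mathcal{F}(M))^n\twoheadrightarrow K$ through the exact functor $\mathcal{G}$ and using (2) together with closure of $\mathrm{Fac}(M)$ under quotients and direct sums. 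The reverse inclusion takes a surjection $M^n\twoheadrightarrow\mathcal{G}(K)$ (available since $\mathcal{G}(K)\in\mathrm{Fac}(M)$), applies the right exact $\mathcal{F}$, and composes with the surjective counit supplied by (b) to produce $(\mathcal{F}(M))^n\twoheadrightarrow K$.

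The direction (3)$\Rightarrow$(2) needs no calculation, since $\mathrm{add}\,M\subseteq\mathrm{Fac}\,M$. For (2)$\Rightarrow$(3) under hypothesis (d), the plan is first to deduce that $\mathcal{GF}(M)$ is $\tau$-rigid over $B$, and then to invoke the known fact from \cite{art:AIR2014} that any $\tau$-rigid module contained in $\mathrm{Fac}(M)$ already lies in $\mathrm{add}(M)$ whenever $M$ is support $\tau$-tilting. Since (1) now holds, $\mathcal{F}(M)$ is support $\tau$-tilting over $A$, and in particular $\tau$-rigid, so $\mathrm{Hom}_A(\mathcal{F}(M),\tau\mathcal{F}(M))=0$. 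Adjunction rewrites this as $\mathrm{Hom}_B(M,\mathcal{G}(\tau\mathcal{F}(M)))=0$, and (d) converts it into $\mathrm{Hom}_B(M,\tau\mathcal{GF}(M))=0$. A surjection $M^n\twoheadrightarrow\mathcal{GF}(M)$ furnished by (2) then upgrades this to $\mathrm{Hom}_B(\mathcal{GF}(M),\tau\mathcal{GF}(M))=0$, giving the required $\tau$-rigidity.

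The main conceptual point lies in the step (2)$\Rightarrow$(3): hypothesis (d) is precisely the device that ports the $\tau$-rigidity of $\mathcal{F}(M)$ across the adjunction to $\tau$-rigidity of $\mathcal{GF}(M)$, and without it one sees no way to connect the $A$-side and $B$-side Auslander-Reiten translates. Once this is in place, the classical AIR characterization of direct summands of a support $\tau$-tilting module closes the argument, modulo a routine bookkeeping with the support idempotent associated to $M$.
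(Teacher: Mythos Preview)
Your argument for (1)$\Leftrightarrow$(2) is correct and coincides with the paper's.

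The step (2)$\Rightarrow$(3) contains a gap: the ``known fact'' you invoke --- that a $\tau$-rigid module lying in $\mathrm{Fac}\,M$ must belong to $\mathrm{add}\,M$ whenever $M$ is support $\tau$-tilting --- is false. Take $M=B$, so that $\mathrm{Fac}\,M=\mathrm{mod}\,B$ and $\mathrm{add}\,M=\mathrm{proj}\,B$; over the path algebra of $1\to 2$ the simple module $S_1$ is $\tau$-rigid but not projective. Your ``routine bookkeeping with the support idempotent'' (i.e.\ the condition $\mathrm{Hom}_B(P,N)=0$) does not rescue this, since here $P=0$.

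The repair is immediate and uses what you have already computed. The key vanishing is $\mathrm{Hom}_B(M,\tau\,\mathcal{GF}(M))=0$; your detour through $\tau$-rigidity of $\mathcal{GF}(M)$ is superfluous. Combine that vanishing with $\mathrm{Hom}_B(\mathcal{GF}(M),\tau M)=0$ and $\mathrm{Hom}_B(P,\mathcal{GF}(M))=0$, both of which follow at once from $\mathcal{GF}(M)\in\mathrm{Fac}\,M=\mathcal{D}_{\mathbf{P}}$. These three conditions say that $(M\oplus\mathcal{GF}(M),P)$ is a $\tau$-rigid pair, and maximality of the support $\tau$-tilting pair $(M,P)$ then forces $\mathcal{GF}(M)\in\mathrm{add}\,M$. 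This is \cite[Corollary~2.13]{art:AIR2014}, and it is exactly the route the paper takes.
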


\begin{proof}
Since $\mathcal{F}\dashv \mathcal{G}$, it is easy to see $H^0(\mathcal{F}(\mathbf{P}))=\mathcal{F}(M)$ and that $K\in \mathcal{D}_{\mathcal{F}(\mathbf{P})}$ if and only if $\mathcal{G}(K)\in \mathcal{D}_{\mathbf{P}}$ (see the proof of \cite[Lemma 2.1]{art:B2020}).

(1)$\Rightarrow$(2) Assume that $\mathcal{F}(\mathbf{P})$ is a silting complex. Then $\mathcal{F}(M)\in \mathcal{D}_{\mathcal{F}(\mathbf{P})}$, hence $\mathcal{GF}(M)\in \mathcal{D}_{\mathbf{P}}=\mathrm{Fac}\,M$.

(2)$\Rightarrow$(1) Suppose that $\mathcal{GF}(M)\in\mathrm{Fac}\,M.$ For every $K\in \mathcal{D}_{L(\mathbf{P})}$ we have $\mathcal{G}(K)\in \mathcal{D}_{\mathbf{P}}$, hence there is an epimorphism $M^n\to \mathcal{G}(K)$. Since $\mathcal{F}$ is right exact, we obtain an epimorphism $\mathcal{F}(M)^n\to \mathcal{FG}(K)$, and using the hypothesis (b) we conclude that $K\in \mathrm{Fac}\,\mathcal{F}(M)$. Then $\mathcal{D}_{\mathcal{F}(\mathbf{P})}\subseteq \mathrm{Fac}\,\mathcal{F}(M)$. The inclusion $\mathrm{Fac}\,\mathcal{F}(M)\subseteq \mathcal{D}_{\mathcal{F}(\mathbf{P})}$ follows from the following observations: by (2) and the equivalence mentioned in the beginning of the proof we have $\mathcal{F}(M)\in \mathcal{D}_{\mathcal{F}(\mathbf{P})}$, and the class $\mathcal{D}_{\mathcal{F}(\mathbf{P})}$ is closed with respect to epimorphic images.

(2)$\Rightarrow$(3) We associate to $\mathbf{P}$ the support $\tau$-tilting pair $(P,M)$. We know that  $\mathcal{GF}(M)\in \mathcal{D}_{\mathbf{P}}$,  hence \[\mathrm{Hom}_B(\mathcal{GF}(M), \tau M)=0 \]
and
\[ \mathrm{Hom}_B(P,\mathcal{GF}(M))=0. \]
Moreover, by using our hypotheses, we obtain
\begin{align*}
	\mathrm{Hom}_B(M, \tau \mathcal{GF}(M)) &\simeq \mathrm{Hom}_B(M, \mathcal{G}(\tau \mathcal{F}( M))) \\
	&  \simeq \mathrm{Hom}_A(\mathcal{F}(M), \tau \mathcal{F}( M)) =0.
\end{align*}
By \cite[Corollary 2.13]{art:AIR2014} we deduce that $\mathcal{GF}(M)\in \mathrm{add}\,M$.
\end{proof}
}
\end{subsec}


\section{Modules over group graded algebras and the Auslander-Reiten translation} \label{sec:graded}

\begin{subsec} Let $A=\bigoplus_{g\in G}A_g$ be a finite dimensional strongly $G$-graded $k$-algebra with 1-component $A_1=B$, so $\phi:B\to A$ is the inclusion. In this particular case,  $(\mathrm{Ind}^A_B,\mathrm{Res}^A_B)$ is a Frobenius pair, and $\mathrm{Ind}^A_B$ is separable. Moreover, if the characteristic of $k$ does not divide the order of $G$, then $\mathrm{Res}^A_B$ is also separable. A more general result on the separability of $\phi$ is given in \cite[Proposition 2.1]{art:NVV1989}. Note that these conditions have been considered in \cite{art:RR1985} and \cite[Section III. 4]{book:ARS}.
\end{subsec}

\begin{subsec} \label{s:g-conj} We will also rely on other properties of the ring extension $\phi:B\to A$ in the $G$-graded context. Recall that for any $g\in G$, the functor $A_g\otimes_B-:\mathrm{mod}\,B\to \mathrm{mod}\,B$ is an autoequivalence with quasi-inverse $A_{g^{-1}}\otimes_B-$. Therefore, the group $G$ acts on the isomorphism classes of $B$ modules, and we denote ${}^gY=A_g\otimes_BY$.
\end{subsec}

The next theorem is crucial in our study of support $\tau$-tilting modules. The proofs of its two statements are, up to a point, very similar. The second statement is essentially in \cite[Section 5]{art:Dade1984}, but we give here an argument which avoids Dade's theory of Clifford extensions over group graded algebras.

\begin{theorem} \label{t:gr-cover} { Assume that the algebra $B$ is self-injective.}

{\rm1)} Let $Y$ be a  $B$-module. Then $A\otimes_B \mathrm{P}(Y) \simeq \mathrm{P}(A\otimes_B Y)$. Moreover, if $P_1\to P_0\to Y\to 0$ is a minimal projective presentation, then $A\otimes_B P_1\to A\otimes_B P_0\to A\otimes_B Y\to 0$ is a minimal projective presentation.

{\rm2)} Assume that $|G|$ is invertible in $k$, and let $X$ be an $A$-module. Then $\mathrm{Res}^A_B \mathrm{P}(X)\simeq \mathrm{P}(\mathrm{Res}^A_B X)$. Moreover, if $Q_1\to Q_0\to X\to 0$ is a minimal projective presentation, then $\mathrm{Res}^A_B Q_1\to \mathrm{Res}^A_B Q_0\to \mathrm{Res}^A_B X\to 0$ is a minimal projective presentation.
\end{theorem}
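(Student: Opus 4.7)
The plan is to use the classical criterion that a surjection $\pi\colon P \to N$ from a projective module $P$ is a projective cover if and only if $\pi$ induces an isomorphism on tops (i.e., on the semisimple quotients by the Jacobson radical). Both minimal-projective-presentation statements then reduce to the projective-cover statement by exactness of $\mathrm{Ind}^A_B$ (since $A$ is free over $B$) and of $\mathrm{Res}^A_B$, applied also to the first syzygy.

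For part (1), $A \otimes_B \mathrm{P}(Y)$ is projective over $A$ since $\mathrm{Ind}^A_B$ takes projectives to projectives, and the natural map $A \otimes_B \mathrm{P}(Y) \to A \otimes_B Y$ is surjective. The key calculation is that for every $B$-module $Z$,
\[
\mathrm{top}_A(A \otimes_B Z) \;\cong\; (A/J(A)) \otimes_{B/J(B)} \mathrm{top}_B(Z).
\]
This rests on two observations: right exactness of $A \otimes_B -$ identifies $\mathrm{top}_A(A \otimes_B Z)$ with $(A/J(A)) \otimes_B Z$; and the strong grading gives $A \cdot J(B) = J(B) \cdot A \subseteq J(A)$, so the right $B$-action on $A/J(A)$ factors through $B/J(B)$. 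The inclusion $J(B) \cdot A \subseteq J(A)$ holds because for any simple $A$-module $M$, the subspace $J(B) M$ is an $A$-submodule (using $A \cdot J(B) = J(B) \cdot A$) and proper (by nilpotency of $J(B)$). Applying the formula to $Z = \mathrm{P}(Y)$ and $Z = Y$, and using $\mathrm{top}_B \mathrm{P}(Y) \cong \mathrm{top}_B Y$, shows the induced surjection is an isomorphism on tops, hence is a projective cover.

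For part (2), the additional ingredient is the graded Maschke-type identity $J(A) = J(B) \cdot A$ when $|G|$ is invertible in $k$: since $A/J(B) A$ is strongly $G$-graded with semisimple identity component $B/J(B)$, averaging over $G$ (possible because $|G| \in k^{\times}$) splits any $B/J(B)$-split submodule over $A/J(B) A$, so the latter is itself semisimple. Consequently $J(A) X = J(B) X$ for every $A$-module $X$, and hence the kernel of the projective cover $\pi \colon \mathrm{P}(X) \to X$ is contained in $J(B) \cdot \mathrm{Res}^A_B \mathrm{P}(X)$; equivalently, $\mathrm{Res}^A_B \pi$ is essential in $\mathrm{mod}\,B$. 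Since $\mathrm{Res}^A_B$ is a left adjoint to the exact functor $\mathrm{Ind}^A_B$ (Frobenius pair), it preserves projectives, so $\mathrm{Res}^A_B \mathrm{P}(X)$ is a projective cover of $\mathrm{Res}^A_B X$. The main obstacle is the graded Maschke step $J(A) = J(B) \cdot A$; everything else unwinds directly from the definitions and the strong grading.
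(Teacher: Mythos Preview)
Your proof is correct, but it follows a genuinely different line from the paper's. The paper argues via the characterization ``projective cover $\Leftrightarrow$ kernel is projective-free'': it first shows that $Y$ is projective-free iff $A\otimes_BY$ is projective-free (using Krull--Schmidt and the decomposition $\mathrm{Res}^A_B\mathrm{Ind}^A_BY\simeq\bigoplus_g{}^gY$), and then applies this to the syzygy $\Omega(Y)$. The self-injectivity hypothesis is used precisely here, to guarantee that $\Omega(Y)$ has no projective summands. Your argument instead goes through the top: from $A\,J(B)=J(B)\,A\subseteq J(A)$ you get the natural identification $\mathrm{top}_A(A\otimes_BZ)\cong (A/J(A))\otimes_{B/J(B)}\mathrm{top}_BZ$, and the projective-cover criterion ``isomorphism on tops'' transfers directly. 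A pleasant byproduct is that your proof of part~1) does not use self-injectivity at all, so you have in fact established a strictly stronger statement than the one in the paper. For part~2), the paper again runs the projective-free argument (now using separability of $\mathrm{Res}^A_B$ to go back and forth), whereas you invoke the graded Maschke identity $J(A)=J(B)\,A$ to conclude that the kernel of $\mathrm{P}(X)\to X$ already sits inside $J(B)\cdot\mathrm{Res}^A_B\mathrm{P}(X)$; both approaches need $|G|\in k^\times$, and yours is arguably more direct. One small point you left implicit: the equality $A_g\,J(B)=J(B)\,A_g$ for each $g$ (hence $A\,J(B)=J(B)\,A$) deserves a line of justification, e.g.\ via $A_gJ(B)A_{g^{-1}}$ being a nilpotent ideal of $B$.
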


\begin{proof} 1) We know that $Y$ is projective if and only $A\otimes_BY$ is projective.

We show that $Y$ is projective-free if and only $A\otimes_BY$ is projective-free. Indeed, assume that $A\otimes_BY$ is projective-free. By the Krull-Schmidt theorem, we may write $Y=Y'\oplus Y''$, where $Y'$ is projective, and $Y''$ is projective-free. We get $A\otimes_B Y=A\otimes_B Y'\oplus A\otimes_B Y''$, where $A\otimes_B Y'$ is projective. But our assumption forces $A\otimes_B Y'=0$, hence $Y'=0$, so $Y$ is projective-free. Conversely, assume that $Y$ is projective-free, and let $A\otimes_B Y=X'\oplus X''$, where $X'$ is projective, and $X''$ is projective-free. We get $\mathrm{Res}^A_B A\otimes_B Y=\mathrm{Res}^A_B X'\oplus \mathrm{Res}^A_B X''$. Note that $\mathrm{Res}^A_B A\otimes_B Y$ is projective-free, because it is the direct sum of the $G$-conjugates of $Y$, and $\mathrm{Res}^A_B X'$ is projective. Our assumption forces $\mathrm{Res}^A_B X'=0$, hence $X'=0$, so $A\otimes_B Y$ is projective-free.

Consider the exact sequence $0\to K\to P_0\to Y\to 0$ in $\mathrm{mod}\,B$, where $P_0=\mathrm{P}(Y)$ is a projective cover of $Y$, and $K=\Omega(Y)$ is projective-free. We obtain the exact sequence $0\to A\otimes_B K\to A\otimes_B P_0\to A\otimes_B Y\to 0$ in $\mathrm{mod}\,A$. Note that $\mathrm{P}(A\otimes_B Y)$ is a direct summand of $A\otimes_B P_0$, but by the above, $A\otimes_B K$ is projective-free, hence $\mathrm{P}(A\otimes_B Y) \simeq A\otimes_B \mathrm{P}(Y)$.

2) It is well known (see \cite{{art:NVV1989}}) that under our  assumptions, the functor $\mathrm{Res}^A_B$ is separable, hence $X$ is projective if and only $\mathrm{Res}^A_B X$ is projective.

We show $X$ is projective-free if and only if $\mathrm{Res}^A_B X$ is projective-free. Indeed, assume that $\mathrm{Res}^A_B X$ is projective-free, and write $X=X'\oplus X''$, where $X'$ is projective, and $X''$ is projective-free. Then $\mathrm{Res}^A_B X=\mathrm{Res}^A_B X'\oplus \mathrm{Res}^A_B X''$, where $\mathrm{Res}^A_B X'$. Our assumption forces $\mathrm{Res}^A_B X'=0$, hence $X'=0$, so $X$ is projective-free. Conversely, assume that $X$ is projective-free, and write $\mathrm{Res}^A_B X=Y'\oplus Y''$, where $Y'$ is projective and $Y''$ is projective-free. We get $\mathrm{Ind}^A_B \mathrm{Res}^A_B X=\mathrm{Ind}^A_B Y'\oplus \mathrm{Ind}^A_B Y''$. But $X$ is a direct summand of $\mathrm{Ind}^A_B \mathrm{Res}^A_B X$, and since $X$ is projective-free, we deduce by 1) that $X$ is a direct summand of $\mathrm{Ind}^A_B Y''$.
{ It follows  that $\mathrm{Res}^A_B X$ is a direct summand of $\mathrm{Res}^A_B \mathrm{Ind}^A_B Y''$. But $\mathrm{Res}^A_B \mathrm{Ind}^A_B Y''$ is a direct sum of $G$-conjugates of $Y''$, hence it is projective-free. Therefore, $\mathrm{Res}^A_B X$ is projective-free.}

Consider the exact sequence $0\to K\to Q_0\to X\to 0$ in $\mathrm{mod}\,A$, where $Q_0=\mathrm{P}(X)$ is a projective cover of $X$, and $K=\Omega(X)$ is projective-free. We obtain the exact sequence $0\to \mathrm{Res}^A_B K\to \mathrm{Res}^A_B Q_0\to \mathrm{Res}^A_B X\to 0$ in $\mathrm{mod}\,B$. Note that $\mathrm{P}(\mathrm{Res}^A_B X)$ is a direct summand of $\mathrm{Res}^A_B Q_0$, but by the above, $\mathrm{Res}^A_B K$ is projective-free, hence $\mathrm{P}(\mathrm{Res}^A_B X) \simeq \mathrm{Res}^A_B \mathrm{P}(X)$.
\end{proof}





\begin{subsec} \label{s:grading-dual} Recall that if $X=\bigoplus X_g$ is a $G$-graded $k$-space, then $\mathrm{D}X=\mathrm{Hom}_k(X, k)$ is also $G$-graded with $g$-component
\[(\mathrm{D}X)_g= \{ f\in \mathrm{D}X \mid  f(X_h)=0 \textrm{ for all } h\in G\setminus\{g^{-1}\}\},\]
and we may identify $(\mathrm{D}X)_g=\mathrm{Hom}_k(X_{g^{-1}},k)$.

If $X$ is an $A$-module, we also denote $X^*=\mathrm{Hom}_A(X,A)$.
\end{subsec}

\begin{proposition} \label{l:dual-bimods-Ind} Let $Y$ be a $B$-module, and denote $X=A\otimes_B Y$. Then

{\rm 1)} $\mathrm{D}(A\otimes_B Y)\simeq \mathrm{D}Y\otimes_B A$ as $G$-graded right $A$-modules.

{\rm 2)} $\mathrm{Hom}_A(A\otimes_B Y,A)\simeq \mathrm{Hom}_B(Y,B)\otimes_B A$ as $G$-graded right $A$-modules.

{\rm 3)} $\nu_A(A\otimes_B Y)\simeq A\otimes_B \nu_B(Y)$ as $G$-graded  $A$-modules.

{\rm 4)} { If $B$ is self-injective, then} $\mathrm{Tr}_A(A\otimes_B Y)\simeq \mathrm{Tr}_BY \otimes_B A$ as $G$-graded right $A$-modules.

{\rm 5)} { If $B$ is self-injective, then} $\tau_A(A\otimes_B Y)\simeq A\otimes_B \tau_B(Y)$ as $G$-graded  $A$-modules.
\end{proposition}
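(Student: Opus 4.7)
The plan is to derive part (5) directly from parts (4) and (1), using the defining formula $\tau = \mathrm{D}\circ\mathrm{Tr}$ in both categories.

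First, I would invoke (4) to get an isomorphism $\mathrm{Tr}_A(A\otimes_B Y)\simeq \mathrm{Tr}_B(Y)\otimes_B A$ of $G$-graded right $A$-modules. This is the step that actually requires the self-injectivity of $B$, since (4) itself rests on Theorem~\ref{t:gr-cover}(1) (preservation of \emph{minimal} projective presentations by $A\otimes_B -$) together with the identification $(A\otimes_B P)^{\ast}\simeq P^{\ast}\otimes_B A$ from (2) applied to the terms of a minimal projective presentation $P_1\to P_0\to Y\to 0$.

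Second, I would apply the $k$-dual. Part (1) is stated for a left $B$-module $Y$, but exactly the same argument (a componentwise computation based on the description of the grading on a $k$-dual given in~\ref{s:grading-dual} and the invertibility of the $B$-bimodules $A_g$ in the strongly graded case) yields the right-module analogue: for any finite-dimensional right $B$-module $Z$,
\[
\mathrm{D}(Z\otimes_B A)\simeq A\otimes_B \mathrm{D}Z
\]
as $G$-graded left $A$-modules. Applying this with $Z=\mathrm{Tr}_B(Y)$ and composing the two isomorphisms gives
\[
\tau_A(A\otimes_B Y)=\mathrm{D}\,\mathrm{Tr}_A(A\otimes_B Y)\simeq \mathrm{D}(\mathrm{Tr}_B(Y)\otimes_B A)\simeq A\otimes_B \mathrm{D}\,\mathrm{Tr}_B(Y)=A\otimes_B\tau_B(Y),
\]
which is the desired isomorphism of $G$-graded $A$-modules.

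The homological content of the statement is really concentrated in (4) and the right-module form of (1); the only subtlety in putting them together is bookkeeping. Concretely, I expect the main obstacle to be checking that each of the two natural isomorphisms respects the $G$-grading, rather than producing only an abstract $A$-module isomorphism. This reduces to tracing through the component decomposition $\mathrm{D}X=\bigoplus_g (\mathrm{D}X)_g$ with $(\mathrm{D}X)_g=\mathrm{Hom}_k(X_{g^{-1}},k)$ from~\ref{s:grading-dual}, and verifying that the natural map in each intermediate step carries the $g$-component on one side to the $g$-component on the other; this is routine but is the only place where something could go wrong.
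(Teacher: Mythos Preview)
Your proposal is correct and matches the paper's proof essentially verbatim: the paper simply writes ``As $\tau=\mathrm{D}\,\mathrm{Tr}$, the statement follows immediately by 1) and 4).'' You are in fact more careful than the paper in making explicit that one needs the right-module analogue of (1), a point the paper silently absorbs into ``immediately''; your remark that this analogue follows by the same componentwise argument (i.e., via \cite[Theorem~2.8]{art:Dade1980} applied on the other side) is exactly right.
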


\begin{proof} 1) We have that $\mathrm{D}X$ is a right $A$-module, where $(fa)(x)=f(ax)$ for all $f\in \mathrm{D}X$, $x\in X$ and $a\in A$. It is clear from \ref{s:grading-dual} that $\mathrm{D}X$ is a $G$-graded right $A$-module with $1$-component isomorphic to the right $B$-module $\mathrm{D}Y$. The statement follows by follows by \cite[Theorem 2.8]{art:Dade1980}.

2) We have that $\mathrm{Hom}_A(X,A)$ is a right $A$-module, where $(fa)(x)=f(x)a$ for all $f\in \mathrm{Hom}_A(X,A)$, $x\in X$ and $a\in A$. By \cite[Proposition 3.12]{art:Dade1980}, $\mathrm{Hom}_A(X,A)$ has a $G$-grading, where, via restriction of scalars to $B$,
\[\mathrm{Hom}_A(X,A)_g\simeq \mathrm{Hom}_B(X_h,A_{hg}),\]
for all $h\in G$, where recall that $X_h=A_h\otimes_BY$. It is clear that that $\mathrm{Hom}_A(X,A)$ becomes a $G$-graded right $A$-module, so again the statement follows by follows by \cite[Theorem 2.8]{art:Dade1980}.

3) As $\nu_B=\mathrm{D} \mathrm{Hom}_B(-,B)$, the statement follows immediately by 1) and 2).

4) We may assume that $Y$ is non-projective. Let $P_1\overset{f_1}\longrightarrow P_0\to Y\to 0$ be a minimal projective presentation of $Y$. Then \[P_0^*\to P_1^*\to \mathrm{Tr}_BY\to 0\] is a minimal projective presentation of $\mathrm{Tr}_BY=\mathrm{Coker}\,f_1^*$. By Theorem \ref{t:gr-cover} 1), $A\otimes_B P_1\to A\otimes_B P_0\to A\otimes_B Y\to 0$ is a minimal projective presentation of $A\otimes_B Y$. Then
\[(A\otimes_B P_0)^*\to (A\otimes_B P_1)^*\to \mathrm{Tr}_A(A\otimes_B Y)\to 0\] is a minimal projective presentation of $\mathrm{Tr}_A(A\otimes_B Y)=\mathrm{Coker}\,(A\otimes_B f_1)^*$. Again by Theorem \ref{t:gr-cover} 1),
\[P_0^*\otimes_BA\to P_1^*\otimes_BA\to \mathrm{Tr}_BY\otimes_BA\to 0\] is a minimal projective presentation of $\mathrm{Tr}_BY\otimes_BA$. The statement now follows by 2) and by the fact that $\mathrm{Coker}$ commutes with $\mathrm{Ind}^A_B$.

5) As $\tau=\mathrm{D} \mathrm{Tr}$, the statement  follows immediately by 1) and 4).
\end{proof}

The above results generalize \cite[Lemma 3.20]{art:KK2020}, \cite[Lemma 3.11]{art:KK2021} and \cite[Lemma 3.2]{art:K2022}.

As a consequence of Proposition \ref{l:dual-bimods-Ind} we deduce that all the above functors commute with $G$-conjugation from \ref{s:g-conj}.  The first statement of the next corollary is \cite[Lemma 3.4]{art:MPan2019}, but we give a different proof here.

\begin{corollary} \label{c:conj-nu-au} Let $Y$ be a $B$-module. Then, for any $g\in G$, we have:

{\rm 1)} $\nu ({}^gY)\simeq {}^g(\nu Y)$.

{\rm 2)} {If $B$ is self-injective then} $\tau ({}^gY)\simeq {}^g(\tau Y)$.
\end{corollary}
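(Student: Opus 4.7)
My strategy will be to derive both parts from the bimodule isomorphisms already established in Proposition~\ref{l:dual-bimods-Ind} by extracting $g$-graded components.

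For (1), the key observation will be that $\mathrm{D}A$ admits two descriptions as a $G$-graded $B$-$B$-bimodule. Applying Proposition~\ref{l:dual-bimods-Ind}(1) to $Y=B$ will give $\mathrm{D}A\simeq \mathrm{D}B\otimes_B A$ as $G$-graded right $A$-modules, and hence as $G$-graded $B$-$B$-bimodules. The symmetric statement for right $B$-modules induced to right $A$-modules will give $\mathrm{D}A\simeq A\otimes_B \mathrm{D}B$ as $G$-graded $B$-$B$-bimodules. Comparing $g$-components will yield
\[\mathrm{D}B\otimes_B A_g\simeq A_g\otimes_B \mathrm{D}B\]
as $B$-$B$-bimodules. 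Tensoring with $Y$ on the right and recalling that $\nu=\mathrm{D}B\otimes_B-$ will then produce
\[\nu({}^gY)=\mathrm{D}B\otimes_B A_g\otimes_B Y\simeq A_g\otimes_B \mathrm{D}B\otimes_B Y={}^g(\nu Y).\]

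For (2), I will combine (1) with the fact (recalled in \ref{s:g-conj}) that $A_g\otimes_B-\colon\mathrm{mod}\,B\to \mathrm{mod}\,B$ is an exact autoequivalence with quasi-inverse $A_{g^{-1}}\otimes_B-$. Any such equivalence preserves projectivity and superfluous submodules, and will therefore send a minimal projective presentation $P_1\to P_0\to Y\to 0$ of $Y$ to a minimal projective presentation
\[A_g\otimes_B P_1\to A_g\otimes_B P_0\to {}^gY\to 0\]
of ${}^gY$. Combining this with (1) applied to each projective $P_i$, and using exactness of $A_g\otimes_B-$ to commute it with the kernel, will give
\[\tau({}^gY)=\ker\bigl(\nu(A_g\otimes_B P_1)\to\nu(A_g\otimes_B P_0)\bigr)\simeq A_g\otimes_B\ker(\nu P_1\to\nu P_0)=A_g\otimes_B\tau Y={}^g(\tau Y).\]

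The only nontrivial bookkeeping will be in (1), where one must make explicit the two $G$-graded $B$-$B$-bimodule structures on $\mathrm{D}A$ and check that they are compatible in the graded sense. Once the bimodule isomorphism $\mathrm{D}B\otimes_B A_g\simeq A_g\otimes_B \mathrm{D}B$ is in hand, both parts fall out formally, with the role of self-injectivity in (2) matching its role in Proposition~\ref{l:dual-bimods-Ind}(5).
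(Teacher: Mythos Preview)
Your proof is correct but follows a different route from the paper's. The paper simply reads off the $g$-component from the $G$-graded $A$-module isomorphisms $\nu_A(A\otimes_B Y)\simeq A\otimes_B\nu_B(Y)$ and $\tau_A(A\otimes_B Y)\simeq A\otimes_B\tau_B(Y)$ established in Proposition~\ref{l:dual-bimods-Ind}(3) and (5): the right-hand side has $g$-component $A_g\otimes_B\nu_B(Y)={}^g(\nu Y)$ (resp.\ ${}^g(\tau Y)$), while unpacking the grading via parts (1) and (2) shows the left-hand side has $g$-component $\nu_B(A_g\otimes_B Y)=\nu({}^gY)$ (resp.\ $\tau({}^gY)$). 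Your approach instead specializes part (1), together with its right-module analogue, to $Y=B$ to obtain the bimodule isomorphism $\mathrm{D}B\otimes_B A_g\simeq A_g\otimes_B\mathrm{D}B$ and then tensors with $Y$; for (2) you argue directly from the fact that $A_g\otimes_B-$ is an exact autoequivalence and therefore preserves minimal projective presentations. The paper's argument is a one-line corollary once the proposition is in place; yours is more self-contained and, notably, your argument for (2) does not in fact use self-injectivity---contrary to your closing remark---since an autoequivalence always preserves minimal projective presentations. This is a mild sharpening over the paper's route, which passes through Proposition~\ref{l:dual-bimods-Ind}(5) and hence through Theorem~\ref{t:gr-cover}, where the self-injectivity hypothesis enters.
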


\begin{proof} 1) By Proposition \ref{l:dual-bimods-Ind} 1), 2) and 3) we see that the $g$-component of $\nu_A(A\otimes_B Y)\simeq A\otimes_B \nu_B(Y)$ is, on the one hand, $\nu_B(A_g\otimes_B Y)$, and it is $A_g\otimes_B \nu_B(Y)$ on the other hand.

2) follows in the same way by Proposition \ref{l:dual-bimods-Ind}  2) and 5).
\end{proof}

We also record the ``$\mathrm{Res}$" version of Proposition \ref{l:dual-bimods-Ind}.

\begin{proposition} \label{l:dual-bimods-Res} Let $X$ be an $A$-module. Then

{\rm 1)} $\mathrm{D}(\mathrm{Res}^A_BX)\simeq \mathrm{Res}^A_B\mathrm{D}X$ as right $B$-modules.

{\rm 2)} $\mathrm{Hom}_B(\mathrm{Res}^A_BX,B)\simeq \mathrm{Res}^A_B\mathrm{Hom}_A(X,A)$ as right $B$-modules.

{\rm 3)} $\nu_B(\mathrm{Res}^A_B X)\simeq \mathrm{Res}^A_B \nu_A(X)$ as  $B$-modules.

{\rm 4)} {If $B$ is self-injective, and} if $|G|$ is invertible in $k$, then $\mathrm{Tr}_B(\mathrm{Res}^A_B X)\simeq \mathrm{Res}^A_B\mathrm{Tr}_A X$ as right $B$-modules.

{\rm 5)}  { If $B$ is self-injective, and} if $|G|$ is invertible in $k$, then $\tau_B(\mathrm{Res}^A_BX)\simeq \mathrm{Res}^A_B \tau_A(X)$ as  $B$-modules.
\end{proposition}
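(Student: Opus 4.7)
My plan is to follow the outline of Proposition~\ref{l:dual-bimods-Ind}, substituting $\mathrm{Res}^A_B$ for $\mathrm{Ind}^A_B = A\otimes_B -$ throughout. The key new input replacing the graded-module bookkeeping (Dade's theorems used there) is that $(\mathrm{Ind}^A_B,\mathrm{Res}^A_B)$ is a Frobenius pair, so $\mathrm{Res}^A_B$ is also right adjoint to $\mathrm{Ind}^A_B$, giving a natural isomorphism $\mathrm{Hom}_B(\mathrm{Res}^A_B X, -) \simeq \mathrm{Hom}_A(X, \mathrm{Ind}^A_B -)$.

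For (1), the underlying $k$-space of $\mathrm{D}(\mathrm{Res}^A_B X)$ is $\mathrm{Hom}_k(X,k)$ with right $B$-action given by $(fb)(x) = f(bx)$; this coincides with the right $B$-module obtained by restricting the right $A$-action on $\mathrm{D}X$, so the identity is the desired $B$-module isomorphism. For (2), specializing the Frobenius right-adjunction to $Y = B$ gives $\mathrm{Hom}_B(\mathrm{Res}^A_B X, B) \simeq \mathrm{Hom}_A(X, \mathrm{Ind}^A_B B) = \mathrm{Hom}_A(X, A)$; the two right $B$-actions (right multiplication by $B$ on the left, restriction along $B\hookrightarrow A$ on the right) correspond under this iso by naturality in the second variable. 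For (3), combine (1) and (2) using $\nu_B = \mathrm{D}\circ(-)^*$ and $\nu_A = \mathrm{D}\circ(-)^*$:
\[ \nu_B(\mathrm{Res}^A_B X) = \mathrm{D}\,\mathrm{Hom}_B(\mathrm{Res}^A_B X, B) \simeq \mathrm{D}(\mathrm{Res}^A_B \mathrm{Hom}_A(X,A)) \simeq \mathrm{Res}^A_B \nu_A(X). \]

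For (4), assume $X$ is non-projective (otherwise both transposes vanish). Fix a minimal projective presentation $Q_1 \to Q_0 \to X \to 0$ in $\mathrm{mod}\,A$. By Theorem~\ref{t:gr-cover}(2), which uses precisely the hypotheses $B$ self-injective and $|G|$ invertible in $k$, the sequence $\mathrm{Res}^A_B Q_1 \to \mathrm{Res}^A_B Q_0 \to \mathrm{Res}^A_B X \to 0$ is again a minimal projective presentation, so $\mathrm{Tr}_B(\mathrm{Res}^A_B X)$ is the cokernel of $(\mathrm{Res}^A_B Q_0)^* \to (\mathrm{Res}^A_B Q_1)^*$. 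By (2) this diagram is isomorphic to $\mathrm{Res}^A_B(Q_0^*) \to \mathrm{Res}^A_B(Q_1^*)$, whose cokernel, by exactness of $\mathrm{Res}^A_B$, is $\mathrm{Res}^A_B \mathrm{Tr}_A(X)$. Finally, (5) follows at once from $\tau = \mathrm{D}\mathrm{Tr}$ together with (1) and (4).

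The main obstacle is (2): checking that the Frobenius adjunction isomorphism is compatible with right $B$-module structures, not merely with the underlying $k$-spaces, so that right multiplication by elements of $B$ on $B$ matches the restricted right $A$-action on $A$. Once this compatibility is secured, parts (1), (3) and (5) are essentially formal, and (4) reduces to invoking Theorem~\ref{t:gr-cover}(2) to guarantee the preservation of minimal projective presentations under restriction.
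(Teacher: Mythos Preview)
Your proof is correct and follows essentially the same route as the paper. For (2) the paper makes the adjunction explicit---sending $g\in\mathrm{Hom}_A(X,A)$ to the map $x\mapsto g(x)_1$ (the degree-$1$ component of $g(x)$)---and then checks by hand that this is right $B$-linear, whereas you invoke the Frobenius adjunction and naturality in the second variable; either argument works. One small slip of wording: the isomorphism $\mathrm{Hom}_B(\mathrm{Res}^A_B X,-)\simeq\mathrm{Hom}_A(X,\mathrm{Ind}^A_B-)$ says that $\mathrm{Res}^A_B$ is \emph{left} adjoint to $\mathrm{Ind}^A_B$ (the non-standard half of the Frobenius pair), not right adjoint as you wrote.
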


\begin{proof} 1) is obvious (and valid for any extension of $k$-algebras).

2) The adjunction isomorphism from $\mathrm{Hom}_A(X, A\otimes_B B)$ to $\mathrm{Hom}_B(\mathrm{Res}^A_B X, B)$ sends a map $g\in \mathrm{Hom}_A(X, A)$ to the map $g(-)_1\in \mathrm{Hom}_B(\mathrm{Res}^A_B X, B)$ { which sends $x\in X$ to $g(x)_1\in B$}. It is easy to verify that this isomorphism is $B$-linear.

3) As $\nu_A=\mathrm{D} \mathrm{Hom}_A(-,A)$, the statement follows immediately by 1) and 2).

4) We may assume that $Y$ is non-projective. Let $Q_1\overset{g_1}\longrightarrow Q_0\to X\to 0$ be a minimal projective presentation of $X$. Then \[Q_0^*\to Q_1^*\to \mathrm{Tr}_AX\to 0\] is a minimal projective presentation of $\mathrm{Tr}_AX=\mathrm{Coker}\,g_1^*$. By Theorem \ref{t:gr-cover} 2), $\mathrm{Res}^A_B Q_1\to \mathrm{Res}^A_B Q_0\to \mathrm{Res}^A_B X\to 0$ is a minimal projective presentation of $\mathrm{Res}^A_B X$. Then
\[(\mathrm{Res}^A_B Q_0)^*\to (\mathrm{Res}^A_B Q_1)^*\to \mathrm{Tr}_B(\mathrm{Res}^A_B X)\to 0\] is a minimal projective presentation of $\mathrm{Tr}_B(\mathrm{Res}^A_B X)=\mathrm{Coker}\,(\mathrm{Res}^A_B g_1)^*$. Again by Theorem \ref{t:gr-cover} 2),
\[\mathrm{Res}^A_B Q_0^* \to \mathrm{Res}^A_B Q_1^*\to \mathrm{Res}^A_B\mathrm{Tr}_A X\to 0\] is a minimal projective presentation of $\mathrm{Res}^A_B \mathrm{Tr}_A X$. The statement now follows by 2) and by the fact that $\mathrm{Coker}$ commutes with $\mathrm{Res}^A_B $.

5) As $\tau=\mathrm{D} \mathrm{Tr}$, the statement  follows immediately by 1) and 4).
\end{proof}

In the final part of this section we discuss the relation between the $G$-action  on the isomorphism classes of $B$-modules and the $\mathrm{Ind}^A_B$-stability condition from \ref{s:stable}.

\begin{subsec} \label{ss:basicnot}  Let $Y$ be a $B$-module.  Then $\mathrm{End}_A(A\otimes_B Y)^{\textrm{op}}$ is a $G$-graded $k$-algebra with $1$-component naturally isomorphic to $\mathrm{End}_B(Y)^{\textrm{op}}$. Moreover,  $Y$ is a $(B,\mathrm{End}_B(Y)^{\textrm{op}})$-bimodule, and $A\otimes_BY$ is  a $G$-graded $(A,\mathrm{End}_A(A\otimes_B Y)^{\textrm{op}})$-bimodule.

The $B$-module $Y$ is called $G$-invariant if ${}^gY=A_g\otimes_B Y\simeq Y$ as $B$-modules for all $g\in G$, or equivalently, $\mathrm{End}_A(A\otimes_B Y)^{\textrm{op}}$ is a crossed product of $\mathrm{End}_B(M)^{\textrm{op}}$ and $G$.

The $B$-module $Y$ is called weakly $G$-invariant  if ${}^g\mathrm{add}\,Y=\mathrm{add}\,Y$ for all $g\in G$, or equivalently, $\mathrm{End}_A(A\otimes_B Y)^{\textrm{op}}$ is strongly $G$-graded (see \cite[Theorem 6.3]{art:Dade1980}).
\end{subsec}

\begin{subsec} \label{ss:AeA} In particular, $G$ acts on the isomorphism classes of projective summands of $B$. If $e\in B$ is an idempotent, then the projective $B$-module $Be$ is $G$-invariant if and only if $eAe$ is a crossed product of $eBe$ and $G$.

In this case, $Ae$ is a $G$-graded $(A,eAe)$-bimodule. Note that $AeA$ is the image of the multiplication map $Ae\otimes_{eAe}eA\to A$, so it is a $G$-graded subbimodule of ${}_AA_A$. Moreover, the ideal $BeB$ of $B$ is $G$-invariant, that is, $A_gBeB=BeBA_g$ for all $g\in G$, and $AeA=AeB=BeA$ is a $G$-graded ideal of $A$. This immediately implies that $A/AeA$ is also strongly $G$-graded, with $1$ component isomorphic to $B/BeB$.
\end{subsec}

\begin{lemma} \label{stable-G-invar} The $B$-module $Y$ is  $\mathrm{Ind}^A_B$-stable  if and only if $Y$ is weakly $G$-invariant. Moreover, if $Y$ is basic, then  $Y$ is  $\mathrm{Ind}^A_B$-stable if and only if $Y$ is $G$-invariant.

\end{lemma}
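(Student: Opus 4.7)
The plan is to start from the explicit description of the restriction of an induced module. Since $A$ is strongly $G$-graded with $A_1=B$, we have a $B$-module decomposition
\[
\mathrm{Res}^A_B\mathrm{Ind}^A_B(Y)=\mathrm{Res}^A_B(A\otimes_B Y)=\bigoplus_{g\in G}(A_g\otimes_B Y)=\bigoplus_{g\in G}{}^gY.
\]
Thus the $\mathrm{Ind}^A_B$-stability condition $\mathrm{Res}^A_B\mathrm{Ind}^A_B(Y)\in\mathrm{add}\,Y$ is literally the condition that $\bigoplus_{g\in G}{}^gY\in\mathrm{add}\,Y$, which (by the Krull--Schmidt theorem and the fact that $\mathrm{add}\,Y$ is closed under finite direct sums and summands) is equivalent to ${}^gY\in\mathrm{add}\,Y$ for every $g\in G$.

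For the first equivalence, I would next unpack the meaning of weak $G$-invariance. One direction is trivial: if ${}^g\mathrm{add}\,Y=\mathrm{add}\,Y$ for every $g$, then in particular ${}^gY\in\mathrm{add}\,Y$. Conversely, assume ${}^gY\in\mathrm{add}\,Y$ for every $g$. Any $Z\in\mathrm{add}\,Y$ is a summand of some $Y^n$, so ${}^gZ$ is a summand of $({}^gY)^n\in\mathrm{add}\,Y$; hence ${}^g\mathrm{add}\,Y\subseteq\mathrm{add}\,Y$. Applying the same inclusion to $g^{-1}$ and using that ${}^{g^{-1}}$ is a quasi-inverse of ${}^g$ yields the reverse inclusion. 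This proves the first equivalence.

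For the ``basic'' addendum, the nontrivial direction is that $\mathrm{Ind}^A_B$-stable plus basic implies $G$-invariant; the converse is immediate from ${}^gY\simeq Y\in\mathrm{add}\,Y$. So assume $Y=Y_1\oplus\cdots\oplus Y_r$ with the $Y_i$ pairwise non-isomorphic indecomposables, and ${}^gY\in\mathrm{add}\,Y$. Because ${}^g=A_g\otimes_B-$ is an autoequivalence with quasi-inverse ${}^{g^{-1}}$, it sends indecomposables to indecomposables and preserves non-isomorphism classes. Hence ${}^gY_i$ is indecomposable and lies in $\mathrm{add}\,Y$, so ${}^gY_i\simeq Y_{\sigma(i)}$ for some map $\sigma\colon\{1,\dots,r\}\to\{1,\dots,r\}$; injectivity of $\sigma$ follows from ${}^gY_i\simeq{}^gY_j\Rightarrow Y_i\simeq Y_j$ via ${}^{g^{-1}}$, so $\sigma$ is a permutation. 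Therefore
\[
{}^gY\simeq\bigoplus_{i=1}^r{}^gY_i\simeq\bigoplus_{i=1}^r Y_{\sigma(i)}\simeq Y,
\]
and $Y$ is $G$-invariant.

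I do not expect any serious obstacle; the only subtle point is keeping the Krull--Schmidt bookkeeping straight in the last step, namely that the autoequivalence ${}^g$ permutes the isomorphism classes of the indecomposable summands of a basic module, from which $G$-invariance falls out directly.
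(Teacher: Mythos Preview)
Your argument is correct and is exactly the approach the paper takes: the paper's proof simply records the decomposition $\mathrm{Res}^A_B(A\otimes_B Y)=\bigoplus_{g\in G}{}^gY$ and the observation that weak $G$-invariance amounts to ${}^gY\in\mathrm{add}\,Y$ for all $g$, declaring both statements obvious from these. You have merely written out the Krull--Schmidt details that the paper leaves implicit, so there is nothing to add or correct.
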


\begin{proof} Both statements are obvious, taking into account that $\mathrm{Res}^A_B Y=\bigoplus_{g\in G}A_g\otimes_B Y$, and that $Y$ is weakly $G$-invariant if and only if ${}^gM\in\mathrm{add}\,Y$ for all $g\in G$.
\end{proof}

\begin{subsec} \label{s:Clifford-corr} Finally, we recall from \cite[Section 7]{art:Dade1980} the ``stable Clifford correspondence". Assume that $Y$ is a weakly $G$-invariant $B$-module. Denote by $\mathrm{mod}\textrm{-}(A\vert Y)$ the full subcategory of $\mathrm{mod}\textrm{-}A$ consisting of $A$-modules $X$ such that $\mathrm{Res}^A_BX\in \mathrm{add}\,Y$.

Then the functor $\mathrm{Hom}_A(A\otimes_BY,-)$ induces and equivalence between $\mathrm{mod}\textrm{-}(A\vert Y)$ and $\mathrm{mod}\textrm{-}(\mathrm{End}_A(A\otimes_B Y)^{\textrm{op}}\vert F)$ ($\mathrm{End}_A(A\otimes_B Y)^{\textrm{op}}$-modules which are projective as $F$-modules), with quasi-inverse $(A\otimes_BY)\otimes_{\mathrm{End}_A(A\otimes_BY)^{\textrm{op}}}-$.
Note also that we have isomorphisms of functors \[\mathrm{Hom}_A(A\otimes_BY,-)\simeq \mathrm{Hom}_B(Y,\mathrm{Res}^A_B(-)):\mathrm{mod}\textrm{-}(A\vert Y)\to\mathrm{mod}\textrm{-}(\mathrm{End}_A(A\otimes_B Y)^{\textrm{op}}\vert F)\] and \[(A\otimes_BY)\otimes_{\mathrm{End}_A(A\otimes_BY)^{\textrm{op}}}-\simeq Y\otimes_{F}-:\mathrm{mod}\textrm{-}(\mathrm{End}_A(A\otimes_B Y)^{\textrm{op}}\vert F)
  \to \mathrm{mod}\textrm{-}(A\vert Y)    .\]
\end{subsec}

\section{Inducing and restricting support $\tau$-tilting {pairs for self-injective algebras}}  \label{sec:ind-res-tau}

We continue with the notations and assumptions of the preceding section. Moreover, from now on \textit{we assume that $B$ (equivalently, $A$) is a self-injective algebra}.

\begin{lemma} \label{l:G-inv} Assume that $(M,P)$ is a  support  $\tau$-tilting  pair, where $M$ is basic and $P=Be$. The following statements are equivalent:

$\mathrm{(1)}$ $M$ is a $G$-invariant $B$-module.

$\mathrm{(2)}$ ${}^g\mathrm{Fac}\,M=\mathrm{Fac}\,M$ for all $g\in G$.

$\mathrm{(3)}$ $P$ is a $G$-invariant $B$-module.

$\mathrm{(4)}$  $\mathbf{P}$ is $G$-invariant in the category of complexes of $B$-modules.
\end{lemma}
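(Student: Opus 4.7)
My plan uses the bijections recalled in \ref{s:bijections} and \ref{ss:notations}, together with the naturality of the autoequivalence ${}^g=A_g\otimes_B -$ of $\mathrm{mod}\,B$ (see \ref{s:g-conj}) with respect to these bijections. The essential ingredient providing the naturality is Theorem~\ref{t:gr-cover}~1).

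For (1)$\Leftrightarrow$(2), I invoke the bijection $M\leftrightarrow \mathrm{Fac}\,M$ between basic support $\tau$-tilting modules and functorially finite torsion classes. Since ${}^g$ is right exact (being an autoequivalence), ${}^g\mathrm{Fac}\,M=\mathrm{Fac}({}^gM)$, so $G$-invariance of $M$ translates directly to $G$-invariance of $\mathrm{Fac}\,M$. For (1)$\Leftrightarrow$(4), Theorem~\ref{t:gr-cover}~1) ensures that ${}^g$ sends the minimal projective presentation $P_1\to P_0\to M\to 0$ of $M$ to the minimal projective presentation ${}^gP_1\to {}^gP_0\to {}^gM\to 0$ of ${}^gM$, hence ${}^g\mathbf{P}=({}^gP_1\oplus {}^gP\to {}^gP_0)$ is the 2-term silting complex corresponding to the pair $({}^gM,{}^gP)$. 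Since a basic support $\tau$-tilting pair is uniquely recovered from its $\tau$-tilting component $M$, the condition ${}^g\mathbf{P}\simeq \mathbf{P}$ reduces to ${}^gM\simeq M$. The implication (4)$\Rightarrow$(3) is immediate by Krull--Schmidt in $\mathcal{K}^b(\mathrm{proj}\,B)$, since $P[1]$ is a direct summand of $\mathbf{P}$.

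The delicate step is (3)$\Rightarrow$(1). My plan is to use \ref{ss:AeA}: if $P=Be$ is $G$-invariant, then $BeB$ is a $G$-invariant ideal, so $A/AeA$ is strongly $G$-graded with $1$-component $B/BeB$. The module $M$ is then a full basic $\tau$-tilting $B/BeB$-module, and the silting datum $\mathbf{P}$ splits into the $P[1]$-summand (known to be $G$-invariant by (3)) together with the subcomplex $P_1\to P_0$, whose terms $P_0,P_1$ are projective $B$-modules determined by $M$ and its syzygy. I will conclude ${}^gM\simeq M$ by applying the already-established equivalence (1)$\Leftrightarrow$(4) \emph{inside} $\mathrm{mod}\,B/BeB$ under the grading induced by $A/AeA$, combined with Theorem~\ref{t:gr-cover}~1) for the quotient graded algebra.

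I expect the main obstacle to be precisely this last step: bridging the $G$-invariance of the projective co-support $P$ (which only records the idempotent $e$) to the $G$-invariance of the full support $\tau$-tilting datum $(M,P)$. The other three implications are clean formal consequences of the naturality of the bijections; here, however, one must carefully use the fact that the pair $(M,P)$ is constrained to be silting data over $B$ and not merely an arbitrary $\tau$-tilting datum over $B/BeB$, and it is self-injectivity together with Theorem~\ref{t:gr-cover}~1) that provides the rigidity needed to pin down ${}^gM\simeq M$.
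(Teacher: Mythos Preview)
Your treatment of (1)$\Leftrightarrow$(2), (1)$\Leftrightarrow$(4) and (4)$\Rightarrow$(3) is correct and is essentially what the paper does: its one-line proof invokes the bijections of \cite[Proposition~2.3 and Theorem~2.7]{art:AIR2014} together with Lemma~\ref{stable-G-invar}, which amounts precisely to the naturality of the autoequivalence ${}^g(-)$ with respect to the correspondences $M\leftrightarrow\mathrm{Fac}\,M\leftrightarrow\mathbf{P}$ that you spell out.

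The gap is in (3)$\Rightarrow$(1), and it is not merely a matter of presentation: your argument is circular. After passing to the strongly $G$-graded quotient $A/AeA$ with $1$-component $B/BeB$, the support $\tau$-tilting pair attached to $M$ becomes $(M,0)$; condition~(3) over the quotient is then vacuous and cannot single out $M$ among the basic $\tau$-tilting $B/BeB$-modules. Invoking ``(1)$\Leftrightarrow$(4) inside $\mathrm{mod}\,B/BeB$'' therefore yields nothing, since you have no $G$-invariance information about the complex $P_1\to P_0$ to feed into it. In fact the implication fails. Let $B$ be the self-injective Nakayama algebra on the cyclic quiver with two vertices modulo $\mathrm{rad}^2$, with simples $S_1,S_2$ and indecomposable projectives $P_1,P_2$ of length~$2$, and let $G=\mathbb{Z}/2$ act by the automorphism exchanging the two vertices, so that the skew group algebra $A$ is strongly $G$-graded with $A_1=B$ and ${}^g(-)$ swaps the indices. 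Then $(M,P)=(P_1\oplus S_1,\,0)$ is a basic support $\tau$-tilting pair (one has $\tau S_1\simeq S_2$ and $\mathrm{Hom}_B(P_1\oplus S_1,S_2)=0$), the module $P=0$ is trivially $G$-invariant, yet ${}^gM\simeq P_2\oplus S_2\not\simeq M$. The paper's brief proof does not isolate (3)$\Rightarrow$(1), and the cited references do not supply it; as stated, condition~(3) is strictly weaker than (1), (2) and (4).
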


\begin{proof} The equivalence of the statements is an immediate consequence of \cite[Proposition 2.3 and Theorem 2.7]{art:AIR2014} and Lemma \ref{stable-G-invar}.
\end{proof}

The next theorem is result  the main result of this paper.

\begin{theorem} \label{t:main-ind-2-silt} Assume that $(M,P)$ is a basic support $\tau$-tilting pair of $B$-modules. The following statements are equivalent.

{\rm(1)} $A\otimes_B\mathbf{P}\in 2\textrm{-}\mathrm{silt}\,A$.

{\rm(2)} $(A\otimes_BM, A\otimes_BP)$ is a support $\tau$-tilting pair of $A$-modules.

{\rm(3)} $M$ is a $G$-invariant.

\noindent In this case, if $e$ is an idempotent such that $P=Be$ then $A\otimes_BP$ may me identified with  $Ae$, and $A\otimes_BM$ is a $\tau$-tilting $A/AeA$-module.
\end{theorem}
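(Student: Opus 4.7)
The plan is to apply Proposition \ref{prop:var-ext-2-silt} to the adjoint pair $\mathcal{F}=\mathrm{Ind}^A_B$ and $\mathcal{G}=\mathrm{Res}^A_B$, and then read off each equivalence.  Hypothesis (a) is precisely the Frobenius pair structure recalled at the start of Section \ref{sec:graded}.  Hypothesis (c) holds because $\mathrm{Res}^A_B$ is exact.  Hypothesis (b) is the surjectivity of the counit $A\otimes_B\mathrm{Res}^A_BK\twoheadrightarrow K$ for every $A$-module $K$, which follows from strong gradedness: writing $1=\sum_i x_i y_i$ with $x_i\in A_{g_i}$, $y_i\in A_{g_i^{-1}}$, every $k\in K$ equals $\sum_i x_i\otimes y_ik$ in the image of the multiplication.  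Hypothesis (d), namely $\tau_B\mathrm{Res}^A_B\mathrm{Ind}^A_B(M)\simeq \mathrm{Res}^A_B\tau_A\mathrm{Ind}^A_B(M)$, follows by combining Proposition \ref{l:dual-bimods-Ind}(5) with Corollary \ref{c:conj-nu-au}(2): on the one hand $\mathrm{Res}^A_B\tau_A(A\otimes_BM)\simeq \mathrm{Res}^A_B(A\otimes_B\tau_BM)=\bigoplus_{g\in G}{}^g(\tau_BM)$, while on the other hand $\tau_B\mathrm{Res}^A_B(A\otimes_BM)=\tau_B\bigl(\bigoplus_{g\in G}{}^gM\bigr)\simeq \bigoplus_{g\in G}\tau_B({}^gM)$, and Corollary \ref{c:conj-nu-au}(2) identifies the two summand by summand.

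With conditions (a)--(d) verified, Proposition \ref{prop:var-ext-2-silt} gives the equivalence of (1) with the $\mathrm{Ind}^A_B$-stability of $M$, and Lemma \ref{stable-G-invar} turns stability into $G$-invariance because $M$ is basic.  This proves (1)$\Leftrightarrow$(3).  For (1)$\Leftrightarrow$(2) I would argue via the bijection recalled in \ref{ss:notations}: by Theorem \ref{t:gr-cover}(1), if $P_1\xrightarrow{f}P_0\to M\to 0$ is a minimal projective presentation of $M$, then $A\otimes_BP_1\xrightarrow{A\otimes f}A\otimes_BP_0\to A\otimes_BM\to 0$ is a minimal projective presentation of $A\otimes_BM$; hence
\[
A\otimes_B\mathbf{P}=\bigl(A\otimes_BP_1\oplus A\otimes_BP\xrightarrow{[A\otimes f\ 0]}A\otimes_BP_0\bigr)
\]
is exactly the $2$-term complex associated in \ref{ss:notations} to the pair $(A\otimes_BM,A\otimes_BP)$, so one side is a $2$-silting complex iff the other is a support $\tau$-tilting pair.

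For the last assertion, the identification $A\otimes_BP=A\otimes_BBe\simeq Ae$ is immediate, and by Lemma \ref{l:G-inv} the idempotent $Be$ is $G$-invariant, so \ref{ss:AeA} gives that $AeA$ is a $G$-graded ideal with $AeA\cap B=BeB$, and $A/AeA$ is strongly $G$-graded with $1$-component $B/BeB$.  Since $M$ is a $\tau$-tilting $B/BeB$-module, its composition factors lie among the simples of $(1-e)B/(1-e)\mathrm{rad}\,B$, and tensoring with $A$ kills $AeA$; hence $A\otimes_BM$ is an $A/AeA$-module, and being support $\tau$-tilting over $A$ with projective part $Ae$ it is $\tau$-tilting over $A/AeA$.

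The main obstacle I expect is checking hypothesis (d) of Proposition \ref{prop:var-ext-2-silt}, since a priori that proposition only needs $\tau$ to commute with $\mathcal{F}$ up to $\mathcal{G}$, not with the composite $\mathcal{GF}$; passing between the two relies crucially on Corollary \ref{c:conj-nu-au}(2), which in turn rests on the self-injectivity of $B$ used in Proposition \ref{l:dual-bimods-Ind}.  Everything else is essentially bookkeeping with the bijection of \ref{ss:notations} together with Theorem \ref{t:gr-cover}.
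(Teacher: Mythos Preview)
Your proof is correct and follows essentially the same route as the paper's own argument: apply Proposition~\ref{prop:var-ext-2-silt} to the pair $(\mathrm{Ind}^A_B,\mathrm{Res}^A_B)$, verify condition~(d) via the same chain of isomorphisms from Proposition~\ref{l:dual-bimods-Ind}(5) and Corollary~\ref{c:conj-nu-au}(2), and handle (1)$\Leftrightarrow$(2) through Theorem~\ref{t:gr-cover}(1) and \cite[Theorem~3.2]{art:AIR2014}. You are slightly more explicit than the paper in checking hypotheses (a)--(c) and in invoking Lemma~\ref{stable-G-invar} to pass from $\mathrm{Ind}^A_B$-stability to $G$-invariance, and for the final clause you conclude $\tau$-tilting over $A/AeA$ directly from the definition of a support $\tau$-tilting pair rather than by reapplying the already-proved equivalence to the graded algebra $A/AeA$; both are fine.
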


{ \begin{proof}
For the equivalence of (1) and (3) we will apply Proposition \ref{prop:var-ext-2-silt} for $\mathcal{F}=A\otimes_B-$ and $\mathcal{G}=\mathrm{Res}^A_B-$. In order to do this, we only have to verify the condition (d). By using Proposition \ref{l:dual-bimods-Ind} and Corollary \ref{c:conj-nu-au}, we obtain
\begin{align*} \tau \mathrm{Res}^A_B (A\otimes_BM) & \cong \bigoplus_{g\in G}\tau(A_g\otimes_BM)\cong   \bigoplus_{g\in G} A_g\otimes_B\tau M \\ & \cong \mathrm{Res}^A_B A\otimes_B\tau M\cong  \mathrm{Res}^A_B \tau(A\otimes_B M),\end{align*} and the proof is complete.

To prove that (1)$\Leftrightarrow$(2), we first assume that $A\otimes_B\mathbf{P}\in 2\textrm{-}\mathrm{silt}\,A$, where $A\otimes_B\mathbf{P}=(A\otimes_BP_1\oplus A\otimes_B P\to A\otimes_B P_0)$. We have that $\mathrm{H}^0 (A\otimes_B\mathbf{P}) \simeq A\otimes_B\mathrm{H}^0(\mathbf{P}) \simeq A\otimes_B M$. Moreover, by Theorem \ref{t:gr-cover} 1),  \[A\otimes_BP_1 \to A\otimes_B P_0\to A\otimes_B M\to 0\] is a minimal projective presentation.  By \cite[Theorem 3.2]{art:AIR2014} it follows that  $(A\otimes_BM, A\otimes_BP)$ is a support $\tau$-tilting pair.

Conversely, assume that $(A\otimes_BM, A\otimes_BP)$ is a support $\tau$-tilting pair. Since by Theorem \ref{t:gr-cover} 1), $A\otimes_BP_1 \to A\otimes_B P_0\to A\otimes_B M\to 0$ is a minimal projective presentation, it follows by \cite[Theorem 3.2]{art:AIR2014} that $A\otimes_B\mathbf{P}=(A\otimes_BP_1\oplus A\otimes_B P\to A\otimes_B P_0)$ is a 2-silting complex.

For the last statement, we have seen in \ref{ss:AeA} that $AeA=BeA=AeB$ and that $A/AeA$ is a strongly $G$-graded $k$-algebra. It also follows that $AeA$ annihilates $A\otimes_BM$, and that $M$ is a $G$-invariant $B/BeB$-module. Since $M$ is a $\tau$-tilting $B/BeB$-module, the above arguments imply that $A\otimes_BM\simeq A/AeA\otimes_{B/BeB}M$ is a tilting $A/AeA$-module.
\end{proof}

\begin{remark}
If we are in the hypothesis (3), we can see directly that $|A\otimes_B M|=\mathrm{s}(A\otimes_B M)=|A|-|Ae|$ as follows: Let $Q$ be an indecomposable projective $A$-module which is not a summand of $A\otimes_BP\simeq Ae$. It is enough to show that
$\mathrm{Top}\,Q$ appears as a composition factor of $A\otimes_B M$, that is, $\mathrm{Hom}_A(Q,A\otimes_B M)\neq 0$. Let $R$ be an indecomposable summand of $\mathrm{Res}^A_BQ$. By Clifford theory we have that $\mathrm{Res}^A_BQ$ is a direct sum of $G$-conjugates of $R$, and $Q$ is a summand of $A\otimes_B R$. Clearly, $R$ is not a summand of $P$. Since $(M,P)$ is a support $\tau$-tilting pair, we have that $\mathrm{Hom}_B(R,M)\neq 0$. We deduce that
$\mathrm{Hom}_A(Q,A\otimes_B M) \simeq \mathrm{Hom}_B(\mathrm{Res}^A_BQ,M) \neq 0 .$
\end{remark}
}

We have a ``Res'' version of Theorem  \ref{t:main-ind-2-silt}.

\begin{theorem} \label{t:main-res-2-silt}  Let $(N,Q)$ be a basic support $\tau$-tilting pair of $A$-modules, and let $\mathbf{Q}=(Q_1\oplus Q\overset{[g\, 0]}\longrightarrow Q_0)$ be the corresponding $2$-silting module. If $|G|$ is invertible in $k$, the following statements are equivalent.

{\rm(1)} $\mathrm{Res}^A_B\mathbf{Q}\in 2\textrm{-}\mathrm{silt}\,B$.

{\rm(2)} $(\mathrm{Res}^A_B N, \mathrm{Res}^A_B Q)$ is a support $\tau$-tilting pair of $B$-modules.

{\rm(3)} $N$ is $\mathrm{Res}^A_B$-stable, that is $\mathrm{Ind}^A_B \mathrm{Res}^A_B N\in\mathrm{add}\,N$.
\end{theorem}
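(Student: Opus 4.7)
The argument is parallel to the proof of Theorem \ref{t:main-ind-2-silt}, with the roles of $\mathrm{Ind}^A_B$ and $\mathrm{Res}^A_B$ interchanged throughout; the $|G|$-invertibility hypothesis enters only to ensure that $\mathrm{Res}^A_B$ is separable, so that Theorem \ref{t:gr-cover}(2) applies. For (1)$\Leftrightarrow$(3), I would apply Proposition \ref{prop:var-ext-2-silt} with the roles of the two algebras swapped, taking $\mathcal{F}:=\mathrm{Res}^A_B$ and $\mathcal{G}:=\mathrm{Ind}^A_B$. Conditions (a) and (c) are immediate from the Frobenius pair structure: $\mathrm{Res}^A_B\dashv\mathrm{Ind}^A_B$, and $\mathrm{Ind}^A_B$ is exact because it is both a left and a right adjoint. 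For (b), the counit $\mathrm{Res}^A_B(A\otimes_B K)\to K$ coincides with the projection $\bigoplus_{g\in G}A_g\otimes_B K \twoheadrightarrow A_1\otimes_B K = K$ onto the $1$-component, and is therefore surjective. Condition (d) reduces to $\tau_A(A\otimes_B\mathrm{Res}^A_B N)\cong A\otimes_B\tau_B(\mathrm{Res}^A_B N)$, which is Proposition \ref{l:dual-bimods-Ind}(5) applied to $Y:=\mathrm{Res}^A_B N$; note that this uses only the standing self-injectivity of $B$, not $|G|$-invertibility. The $\mathcal{F}$-stability condition of the proposition then reads $\mathrm{Ind}^A_B\mathrm{Res}^A_B N \in \mathrm{add}\,N$, which is exactly (3).

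For (1)$\Leftrightarrow$(2), I would imitate the corresponding step in the proof of Theorem \ref{t:main-ind-2-silt}. Letting $Q_1 \to Q_0 \to N \to 0$ be the minimal projective presentation from which $\mathbf{Q}$ is built, Theorem \ref{t:gr-cover}(2) (this is the place where $|G|$ invertible enters) shows that $\mathrm{Res}^A_B Q_1 \to \mathrm{Res}^A_B Q_0 \to \mathrm{Res}^A_B N \to 0$ is again a minimal projective presentation, and $H^0(\mathrm{Res}^A_B\mathbf{Q}) \simeq \mathrm{Res}^A_B N$. The Adachi--Iyama--Reiten bijection \cite[Theorem 3.2]{art:AIR2014} between $2$-term silting complexes and basic support $\tau$-tilting pairs then identifies (1) with the assertion that $(\mathrm{Res}^A_B N, \mathrm{Res}^A_B Q)$ is a support $\tau$-tilting pair, which is (2).

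The main, and rather modest, obstacle is verifying condition (b) of Proposition \ref{prop:var-ext-2-silt}: one must explicitly identify the counit of $\mathrm{Res}^A_B \dashv \mathrm{Ind}^A_B$ coming from the Frobenius structure in order to check its surjectivity. A pleasant observation along the way is that condition (d) requires only the standing self-injectivity of $B$, so the $|G|$-invertibility hypothesis is used only to handle minimal projective presentations via Theorem \ref{t:gr-cover}(2); every remaining step transports directly from the induction case treated in Theorem \ref{t:main-ind-2-silt}.
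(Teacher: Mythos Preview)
Your proof is correct and follows the same template as the paper's: apply Proposition~\ref{prop:var-ext-2-silt} with $\mathcal{F}=\mathrm{Res}^A_B$, $\mathcal{G}=\mathrm{Ind}^A_B$ for (1)$\Leftrightarrow$(3), and use Theorem~\ref{t:gr-cover}(2) together with \cite[Theorem~3.2]{art:AIR2014} for (1)$\Leftrightarrow$(2).

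There is one small but noteworthy deviation. For condition~(d) of Proposition~\ref{prop:var-ext-2-silt} the paper invokes Proposition~\ref{l:dual-bimods-Res}(5), which requires $|G|$ invertible in $k$; you instead invoke Proposition~\ref{l:dual-bimods-Ind}(5) with $Y=\mathrm{Res}^A_B N$, which gives $\tau_A(A\otimes_B\mathrm{Res}^A_B N)\simeq A\otimes_B\tau_B(\mathrm{Res}^A_B N)$ directly and needs only the self-injectivity of $B$. Your route is in fact the more direct verification of (d) as it is literally stated, and it yields the pleasant byproduct you point out: the equivalence (1)$\Leftrightarrow$(3) holds without the $|G|$-invertibility hypothesis, which is then used solely in (1)$\Leftrightarrow$(2) via Theorem~\ref{t:gr-cover}(2). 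Your explicit identification of the counit of $\mathrm{Res}^A_B\dashv\mathrm{Ind}^A_B$ with the projection onto the $1$-component (checking condition~(b)) is also a detail the paper leaves implicit.
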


\begin{proof}  The equivalence of (1) and (3) again follows { from Proposition \ref{prop:var-ext-2-silt}, for $\mathcal{F}=\mathrm{Res}^A_B-$ and $\mathcal{G}=A\otimes_B-$. In this case, the hypothesis (d) from Proposition \ref{prop:var-ext-2-silt} is satisfied by Proposition \ref{l:dual-bimods-Res} 5). }

Next, we will prove (1)$\Leftrightarrow$(2) Assume that $\mathrm{Res}^A_B \mathbf{Q}\in 2\textrm{-}\mathrm{silt}\,B$, where $\mathrm{Res}^A_B  \mathbf{Q}=(\mathrm{Res}^A_B  Q_1\oplus \mathrm{Res}^A_B Q\to \mathrm{Res}^A_B Q_0)$. We have that $\mathrm{H}^0 (\mathrm{Res}^A_B \mathbf{Q}) \simeq \mathrm{Res}^A_B \mathrm{H}^0(\mathbf{Q}) \simeq \mathrm{Res}^A_B N$. Moreover, by Theorem \ref{t:gr-cover} 2),  \[\mathrm{Res}^A_B Q_1 \to \mathrm{Res}^A_B Q_0\to \mathrm{Res}^A_B N\to 0\] is a minimal projective presentation.  By \cite[Theorem 3.2]{art:AIR2014} it follows that  $(\mathrm{Res}^A_B N, \mathrm{Res}^A_B Q)$ is a support $\tau$-tilting pair.

Conversely, assume that $(\mathrm{Res}^A_B N, \mathrm{Res}^A_B Q)$ is a support $\tau$-tilting pair. Since by Theorem \ref{t:gr-cover} 2), $\mathrm{Res}^A_B Q_1 \to \mathrm{Res}^A_B Q_0\to \mathrm{Res}^A_B N\to 0$ is a minimal projective presentation, it follows by \cite[Theorem 3.2]{art:AIR2014} that $\mathrm{Res}^A_B \mathbf{P}=(\mathrm{Res}^A_B Q_1\oplus \mathrm{Res}^A_B Q\to \mathrm{Res}^A_B Q_0)$ is a 2-silting complex.
\end{proof}

\begin{remark} The $B$-modules $\mathrm{Res}^A_B N$,  $\mathrm{Res}^A_B Q$, and the idempotent $e\in B$ such that $\mathrm{Res}^A_B Q=Be$ can be described using \cite[Theorem 2.4.1]{book:M1999}.
\end{remark}

\begin{corollary} \label{c:tau-1} The bijection between the isomorphism classes of support $\tau$-tilting pairs and the isomorphism classes of support $\tau^{-1}$-tilting pairs commute with $\mathrm{Ind}^A_B$ and $\mathrm{Res}^A_B$.
\end{corollary}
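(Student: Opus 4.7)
The plan is to trace the bijection from \ref{ss:notations} through the Nakayama functor. Recall that if $\mathbf{P}$ is the $2$-silting complex associated with a basic support $\tau$-tilting pair $(M,P)$ of $B$-modules, then $\nu_B\mathbf{P}$ is the associated $2$-cosilting complex, and the corresponding support $\tau^{-1}$-tilting module is $M' = H^{-1}(\nu_B\mathbf{P})$. Thus the corollary amounts to checking that $\nu$ commutes with $\mathrm{Ind}^A_B$ and $\mathrm{Res}^A_B$ at the level of $2$-term complexes and their cohomology, something already handled by Propositions~\ref{l:dual-bimods-Ind}(3) and~\ref{l:dual-bimods-Res}(3).

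For the induction statement, assume $(M,P)$ is a $G$-invariant basic support $\tau$-tilting pair of $B$-modules. Theorem~\ref{t:main-ind-2-silt} gives that $A\otimes_B\mathbf{P}$ is a $2$-silting complex of $A$-modules corresponding to the support $\tau$-tilting pair $(A\otimes_BM,\,A\otimes_BP)$; its $\tau^{-1}$-correspondent is, by definition, $H^{-1}(\nu_A(A\otimes_B\mathbf{P}))$. Applying Proposition~\ref{l:dual-bimods-Ind}(3) termwise yields an isomorphism of complexes of $A$-modules
\[
\nu_A(A\otimes_B\mathbf{P})\;\simeq\; A\otimes_B\nu_B(\mathbf{P}),
\]
and since $A$ is finitely generated projective as a right $B$-module (by strong grading), $A\otimes_B-$ is exact and in particular commutes with $H^{-1}$. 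Therefore
\[
H^{-1}\bigl(\nu_A(A\otimes_B\mathbf{P})\bigr)\;\simeq\;A\otimes_B H^{-1}(\nu_B\mathbf{P})\;=\;A\otimes_BM',
\]
which is the $\tau^{-1}$-correspondent of $(A\otimes_BM,\,A\otimes_BP)$ as required. The projective part is transported correctly because if $P=Be$ then the same idempotent $e$ labels the projective summand of $\mathbf{P}$ and the injective summand of $\nu_B\mathbf{P}$, and both are sent by induction to the summand labelled by $e$ over $A$.

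For the restriction statement, under the hypotheses of Theorem~\ref{t:main-res-2-silt}, the same argument applied to Proposition~\ref{l:dual-bimods-Res}(3), together with exactness of $\mathrm{Res}^A_B$, gives
\[
H^{-1}\bigl(\nu_B(\mathrm{Res}^A_B\mathbf{Q})\bigr)\;\simeq\;\mathrm{Res}^A_B H^{-1}(\nu_A\mathbf{Q})\;=\;\mathrm{Res}^A_B N',
\]
and the projective summand is handled similarly via Theorem~\ref{t:gr-cover}(2) and the description of the restricted idempotent mentioned in the remark above. The main obstacle is essentially nonexistent: the serious work (compatibility of $\nu$ with induction and restriction) has been done in the preceding propositions, and the corollary is a bookkeeping consequence once one observes that both functors $A\otimes_B-$ and $\mathrm{Res}^A_B$ are exact and therefore pass through $H^{-1}$.
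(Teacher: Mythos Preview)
Your argument is correct, but it follows a different path than the paper's. The paper invokes the explicit formula from \cite[Theorem~2.15]{art:AIR2014}, namely $(M,P)\mapsto(\tau M\oplus\nu P,\ \nu M^{\mathrm{pr}})$ with $M^{\mathrm{pr}}$ the maximal projective summand of $M$, and then checks each piece separately: compatibility of $\tau$ with $\mathrm{Ind}^A_B$ and $\mathrm{Res}^A_B$ comes from Propositions~\ref{l:dual-bimods-Ind}(5) and~\ref{l:dual-bimods-Res}(5), compatibility of $\nu$ from parts~(3) of the same propositions, and the correct behaviour of the maximal projective summand $M^{\mathrm{pr}}$ from the projective\,/\,projective-free argument in the proof of Theorem~\ref{t:gr-cover}.

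You instead pass through the $2$-cosilting complex $\nu\mathbf{P}$ and read off the $\tau^{-1}$-tilting module as $H^{-1}(\nu\mathbf{P})$, so that only the compatibility of $\nu$ (Propositions~\ref{l:dual-bimods-Ind}(3) and~\ref{l:dual-bimods-Res}(3)) together with exactness of $A\otimes_B-$ and $\mathrm{Res}^A_B$ is needed. This is a cleaner route for the module component, since it sidesteps both the $\tau$-compatibility and the projective-free decomposition. The paper's approach, on the other hand, makes the second (injective) component of the $\tau^{-1}$-pair completely explicit, whereas your treatment of that component via the idempotent $e$ is a touch informal; it is correct, but if you wanted to match the paper's level of precision there you would note that the stalk summands of $\nu\mathbf{P}$ are exactly the $\nu$-images of the stalk summands of $\mathbf{P}$, and these are preserved termwise by both functors.
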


\begin{proof} By \cite[Theorem 2.15]{art:AIR2014}, the bijection is given by $(M,P)\mapsto (\tau\,M\oplus\nu\,P, \nu\,M')$, where $M'$ is the maximal projective summand of $M$. The statement follows by Theorems \ref{t:main-ind-2-silt} and \ref{t:main-res-2-silt}, and the proof of Theorem \ref{t:gr-cover}.
\end{proof}

\section{Semibricks}  \label{s:semibrick}

{ We continue to use the setting of Section \ref{s:tau-tilt-2-silt} and with the assumption that $B$ is self-injective}. { Denote $E=\mathrm{End}_A(A\otimes_BM)^{\textrm{op}}$ and $F=E_1\simeq\mathrm{End}_B(M)^{\textrm{op}}.$}

The following theorem is inspired by and generalizes some results of \cite[Section 4.1]{art:KK2021}, which actually constitutes the primary motivation of our paper.

\begin{theorem} \label{f:bricks} Let $M$ be a $G$-invariant support $\tau$-tilting $B$-module, and let $S=M/L\in \mathrm{f}_\mathrm{L}\textrm{-sbrick}\,B$ be defined as in \ref{ss:notations}. The following statements hold.

{\rm1)} The bijection between the isomorphism classes of left mutations of $M$ and the  isomorphism classes of the bricks $S_1,\dots, S_l$ in $M/\mathrm{rad}_FM$ is $G$-equivariant.

{\rm2)} $\mathrm{Fac}(A\otimes_BM)=\mathrm{T}(A\otimes_BS)$.

{\rm3)} Let $\bar S\in \mathrm{f}_\mathrm{L}\textrm{-}\mathrm{sbrick}\ A$ be the left-finite semibrick corresponding to $A\otimes_BM$. Then $\mathrm{Res}^A_B\bar S$ belongs to $\mathrm{Fac}\,S$.

{\rm4)} There is a bijection between the isomorphism classes of left finite $A$-semibricks belonging to $\mathrm{mod}\,(A|S)$ and the isomorphism classes of left finite $\mathrm{End}_A(A\otimes_BS)^{\mathrm{op}}$-semibricks.

{\rm5)} Let $S_i\in \mathrm{f}_\mathrm{L}\textrm{-}\mathrm{brick}\,B$ be a direct summand of $S$. Assume that the $B$-module structure of $S_i$ extends to an $A$-module structure.  Then there is a bijection between the isomorphism classes of extensions of $S_i$ to $A$ and the isomorphism classes of left finite $\mathrm{End}_A(A\otimes_BS_i)^{\mathrm{op}}$-semibricks of $\mathrm{End}_B(S_i)^{\mathrm{op}}$-dimension $1$.

{\rm6)} Assume that the nonisomorphic direct summands $S_i$ and $S_j$ of $S$ extend to the $A$-modules $\tilde S_i$ and $\tilde S_j$, respectively. Then $\mathrm{Hom}_A(\tilde S_i,\tilde S_j) =0$.
\end{theorem}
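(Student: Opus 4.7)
The plan is to reduce the statement immediately to the defining property of a semibrick, using the faithfulness of the restriction functor. Recall from \ref{ss:notations} and \ref{s:bijections} that $S=\bigoplus_{i=1}^n S_i$ is a semibrick in $\mathrm{mod}\,B$, so by definition the summands $S_i$ are pairwise nonisomorphic bricks and $\mathrm{Hom}_B(S_i,S_j)=0$ for all $i\neq j$. In particular this vanishing holds for the two nonisomorphic summands appearing in the hypothesis.

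Next I would invoke the fact that $\mathrm{Res}^A_B$ is exact and faithful (indeed, $A$ is free as a right $B$-module, since the grading is strong), so it induces a natural injection
\[
\mathrm{Hom}_A(\tilde S_i,\tilde S_j) \hookrightarrow \mathrm{Hom}_B\bigl(\mathrm{Res}^A_B\tilde S_i,\;\mathrm{Res}^A_B\tilde S_j\bigr).
\]
The assumption that $\tilde S_i$ and $\tilde S_j$ extend $S_i$ and $S_j$ means exactly that $\mathrm{Res}^A_B\tilde S_i\simeq S_i$ and $\mathrm{Res}^A_B\tilde S_j\simeq S_j$, so the target identifies with $\mathrm{Hom}_B(S_i,S_j)=0$. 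This forces $\mathrm{Hom}_A(\tilde S_i,\tilde S_j)=0$, which is the desired conclusion.

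There is no serious obstacle here: the statement is a direct corollary of the observation that an $A$-linear map is a fortiori $B$-linear after restriction, combined with the semibrick property of $S$ recorded in \ref{ss:notations}. It is worth noting that this argument makes no use of \emph{which} extensions of $S_i$ and $S_j$ to $A$ are chosen, so in the setting of part (5) where $S_i$ may admit several non-isomorphic extensions, the vanishing holds uniformly across all choices of $\tilde S_i$ and $\tilde S_j$.
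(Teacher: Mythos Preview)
Your argument for part 6) is correct, and it is actually more elementary than the paper's. The paper invokes Miyashita's theorem \cite[Theorem 1.4.8]{book:M1999} to identify $\mathrm{Hom}_A(\tilde S_i,\tilde S_j)$ with the $G$-fixed points $\mathrm{Hom}_B(S_i,S_j)^G$, and then concludes from $\mathrm{Hom}_B(S_i,S_j)=0$. You bypass this structural identification entirely: the forgetful inclusion $\mathrm{Hom}_A(\tilde S_i,\tilde S_j)\hookrightarrow \mathrm{Hom}_B(S_i,S_j)$ (an $A$-map is in particular a $B$-map) already lands in zero, so nothing more is needed. The paper's route gives extra information in general---the $G$-module structure on $\mathrm{Hom}_B$ and the fixed-point description of $\mathrm{Hom}_A$---which is genuinely used in part 5); but for part 6) your observation that faithfulness of restriction suffices is the cleaner argument. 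Your closing remark that the vanishing is uniform over all choices of extensions is also correct and worth keeping.
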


\begin{proof} 1) Since $M$ is $G$-invariant, the set of isomorphism classes of the bricks in $M/\mathrm{rad}_FM$ is $G$-invariant, and by  Corollary \ref{c:conj-nu-au} and \cite[Proposition 2.12]{art:Asai2020}, the support $\tau^{-1}$-tilting $B$-module $N$ corresponding to $M$ is also $G$-invariant. Let $M'< M$ be the left mutation of $M$ corresponding to $S_i$ (see \cite[Definition 2.14]{art:Asai2020}), and let $N'$ be the support $\tau^{-1}$-tilting $B$-module corresponding to $M'$, { so $N'$ is a left mutation of $N$, by \cite[Theorem 2.15]{art:Asai2020}. For $g\in G$, $g$-conjugation is a Morita autoequivalence of $\mathrm{mod}\,B$, hence  ${}^gM'$ is a left mutation of ${}^gM\simeq M$, and ${}^gN'$ is a left mutation of ${}^gN\simeq N$. }
By  \cite[Proposition 2.17]{art:Asai2020}, $S_i$ is the unique brick in $\mathrm{Fac}\,M\cap \mathrm{Sub}\,N'$.  Then ${}^gS_i$ is the unique brick in ${}^g(\mathrm{Fac}\,M\cap \mathrm{Sub}\,N')=\mathrm{Fac}\,M\cap \mathrm{Sub}\,{}^gN'$. Consequently, the brick ${}^gS_i$  corresponds to the left mutation ${}^gM'$ of $M$.

2) The epimorphism $M\to S$ of $B$-modules induces the epimorphism $A\otimes_BM\to A\otimes_BS$ of $A$-modules. Since $A\otimes_BM$ is support $\tau$-tilting, $\mathrm{Fac}\,A\otimes_BM$ is a torsion class, hence $\mathrm{T}(A\otimes_BS)\subseteq \mathrm{Fac}(A\otimes_BM)$. Conversely, since $M\in \mathrm{T}(S)=\mathrm{Filt}(\mathrm{Fac}(S))$, it follows easily by the exactness of $A\otimes_B-$ that $A\otimes_BM\in \mathrm{Filt}(\mathrm{Fac}(A\otimes_BS))=\mathrm{T}(A\otimes_BS)$.

3) { Since $\mathrm{J}_{\mathrm{gr}}(E)=\mathrm{J}(F)E=E\mathrm{J}(F)\subseteq \mathrm{J}(E)$,} we have the $A$-module epimorphisms
\[A\otimes_BM\to A\otimes_BM/(A\otimes_BM)\mathrm{J}_{\mathrm{gr}}(E) \to A\otimes_BM /(A\otimes_BM)\mathrm{J}(E) =\bar S. \]
We restrict this to $B$, and notice that $A\otimes_BM$ is a direct sum of copies of $M$, while
\[A\otimes_BM/(A\otimes_BM)\mathrm{J}_{\mathrm{gr}}(E) \simeq \mathrm{Res}^A_B A\otimes_B M/M \mathrm{J}(F)\] is a direct sum of copies of $S$, hence $\mathrm{Res}^A_B\bar S\in \mathrm{Fac}\,S$.

4) By the stable Clifford correspondence \cite[Theorem 7.4]{art:Dade1980} (see \ref{ss:basicnot}  and \ref{s:Clifford-corr}), the functors
\[\mathrm{Hom}_A(A\otimes_BS,-)\simeq \mathrm{Hom}_B(S,\mathrm{Res}^A_B(-))\] and
\[(A\otimes_BS)\otimes_{\mathrm{End}_A(A\otimes_BS)^{\mathrm{op}}}- \simeq S\otimes_{\mathrm{End}_B(S)^{\mathrm{op}}}\mathrm{Res}^{\mathrm{End}_A(A\otimes_BS)^{\mathrm{op}}}_{\mathrm{End}_B(S)}(-) \]
induce an equivalence between $\mathrm{mod}\,(A|S)$ and $\mathrm{mod}\,(\mathrm{End}_A(A\otimes_BS)^{\mathrm{op}} | \mathrm{End}_B(S)^{\mathrm{op}})$. But the algebra $\mathrm{End}_B(S)^{\textrm{op}}$ is semisimple, hence the latter category coincides with  $\mathrm{mod}\,\mathrm{End}_A(A\otimes_BS)^{\mathrm{op}}$.

5) Let $\tilde S_i$ be an $A$-module whose restriction to $B$ is isomorphic to $S$. Then by Miyashita's theorem \cite[Theorem 1.4.8]{book:M1999}, the group $G$ acts on the algebra $\mathrm{End}_B(S_i)$, and $\mathrm{End}_A(\tilde S_i)$ is isomorphic to the $G$-fixed subalgebra $\mathrm{End}_B(S_i)^G$ of $\mathrm{End}_B(S_i)$. Therefore, $\tilde S_i$ is a brick, and it obviously belongs to $\mathrm{mod}\,(A|S_i)$.

As in 4), the category $\mathrm{mod}\,(A|S_i)$ is equivalent to $\mathrm{mod}\,\mathrm{End}_A(A\otimes_BS_i)^{\mathrm{op}}$. Moreover, we have that \[\dim_{\mathrm{End}_B(S_i)^{\mathrm{op}}} \mathrm{Hom}_B(S_i,\tilde S_i)=1.\]

6) Again by \cite[Theorem 1.4.8]{book:M1999}, $\mathrm{Hom}_B(S_i, S_j)$ is a $kG$-module, and $\mathrm{Hom}_A(\tilde S_i, \tilde S_j)=\mathrm{Hom}_B(S_i, S_j)^G$. Since $\mathrm{Hom}_B(S_i, S_j)=0$, the statement follows.
\end{proof}

\begin{corollary} \label{c:sep-res} 
If $|G|$ is invertible in $k$, then:

{\rm a)} $\mathrm{mod}\,(A|S)= \mathrm{add}\,(A\otimes_BS) $.

{\rm b)}  $A\otimes_BS$ is an $A$-semibrick.
\end{corollary}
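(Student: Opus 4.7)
The plan is to prove (a) first and then use it together with the equivalence of Theorem \ref{f:bricks}(4) to handle (b). For (a), I would verify the two inclusions separately. The inclusion $\mathrm{add}(A\otimes_BS)\subseteq\mathrm{mod}(A|S)$ is immediate: Theorem \ref{f:bricks}(1) shows that $G$ permutes the isomorphism classes of the bricks $S_1,\dots,S_l$, so $S$ is weakly $G$-invariant in the sense of \ref{ss:basicnot}, and hence $\mathrm{Res}^A_B(A\otimes_BS)=\bigoplus_{g\in G}{}^gS\in\mathrm{add}\,S$.

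For the reverse inclusion I would use separability of $\mathrm{Res}^A_B$, which holds because $|G|$ is invertible in $k$. Given $X\in\mathrm{mod}(A|S)$, separability means the counit $\mathrm{Ind}^A_B\mathrm{Res}^A_BX\to X$ splits, so $X$ is a direct summand of $\mathrm{Ind}^A_B\mathrm{Res}^A_BX$. Since $\mathrm{Res}^A_BX\in\mathrm{add}\,S$, we have $\mathrm{Ind}^A_B\mathrm{Res}^A_BX\in\mathrm{add}(A\otimes_BS)$, and the claim follows.

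For (b), I would transport $A\otimes_BS$ through the equivalence
\[ \mathrm{Hom}_A(A\otimes_BS,-)\colon\mathrm{mod}(A|S)\xrightarrow{\sim}\mathrm{mod}\,E,\qquad E:=\mathrm{End}_A(A\otimes_BS)^{\mathrm{op}}, \]
which sends $A\otimes_BS$ to the regular $E$-module. By \ref{ss:basicnot}, weak $G$-invariance of $S$ makes $E$ a strongly $G$-graded algebra with $1$-component $F=\mathrm{End}_B(S)^{\mathrm{op}}$. Since $S$ is basic, $F$ is a product of division rings, hence semisimple. Invertibility of $|G|$ in $k$ forces $\mathrm{Res}^E_F$ to be separable, so any short exact sequence of $E$-modules that splits over $F$ already splits over $E$; combined with semisimplicity of $F$ this yields that $E$ is semisimple (the graded Maschke theorem). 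Therefore the regular $E$-module decomposes as a direct sum of simple $E$-modules, which by Schur's lemma are pairwise Hom-orthogonal bricks; transporting back via the equivalence shows that the distinct indecomposable summands of $A\otimes_BS$ form an $A$-semibrick.

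The main obstacle is the clean invocation of the Maschke-type statement that $E$ is semisimple: the chain ``$|G|$ invertible $\Rightarrow$ $\mathrm{Res}^E_F$ separable $\Rightarrow$ every short exact sequence of $E$-modules splits'' requires isolating separability of the restriction between these auxiliary algebras rather than between $A$ and $B$. Everything else is bookkeeping combining (a) with the stable Clifford correspondence of \ref{s:Clifford-corr} already used in Theorem \ref{f:bricks}(4).
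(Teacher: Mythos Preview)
Your proposal is correct and follows essentially the same route as the paper. For (a), both you and the paper use separability of $\mathrm{Res}^A_B$ to split $U$ off from $A\otimes_B\mathrm{Res}^A_B U$; for (b), both apply the graded Maschke theorem to $E=\mathrm{End}_A(A\otimes_BS)^{\mathrm{op}}$ and then read off the semibrick property via the stable Clifford equivalence of Theorem~\ref{f:bricks}(4). The only cosmetic difference is that the paper observes $E$ is a \emph{crossed product} (since $S$, being basic and having its indecomposable summands permuted by $G$, is actually $G$-invariant, not just weakly so), whereas you only claim $E$ is strongly $G$-graded; your weaker statement is still sufficient for the Maschke argument, and the ``obstacle'' you flag---separability of $\mathrm{Res}^E_F$---is immediate from the same averaging argument that gives separability of $\mathrm{Res}^A_B$, applied to the strongly $G$-graded algebra $E$.
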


\begin{proof} a) Obviously, $\mathrm{add}\,(A\otimes_BS)\subseteq \mathrm{mod}\,(A|S)$. Conversely, let $U\in\mathrm{mod}\,(A|S)$. Then $\mathrm{Res}^A_BU$ is a direct summand of $\mathrm{Res}^A_B (A\otimes_BU)$, so by our assumption, $U$ is a direct summand of $ A\otimes_BU$. Consequently, $U\in \mathrm{add}\,(A\otimes_BS)$.

b) We have that $\mathrm{End}_A(A\otimes_BS)^{\mathrm{op}}$ is a crossed product of $\mathrm{End}_B(S)^{\mathrm{op}}$ and $G$. But $\mathrm{End}_B(S)^{\mathrm{op}}$ is semisimple, so by our assumption and a graded version of Maschke's theorem, $\mathrm{End}_A(A\otimes_BS)^{\mathrm{op}}$ is also semisimple. By Theorem \ref{f:bricks}. 4), the direct summands of $A\otimes_BS$ correspond to the simple submodules of $\mathrm{End}_A(A\otimes_BS)^{\mathrm{op}}$, hence $A\otimes_BS$ is a semibrick.
\end{proof}

\begin{remark} Denote by $\mathcal{T}$, as in \ref{s:bijections}, the functorially finite torsion class determined by $M$, and by $\bar{\mathcal{T}}$ the functorially finite torsion class determined by $A\otimes_BM$. We know that $\mathcal{T}=\mathrm{Fac}\,M=\mathrm{T}(S)$, and $\bar{\mathcal{T}}=\mathrm{Fac}\,A\otimes_B M=\mathrm{T}(\bar S)$, where $\bar S\in \mathrm{f}_\mathrm{L}\textrm{-}\mathrm{sbrick}\ A$ be the left-finite semibrick corresponding to $A\otimes_BM$. From Theorem \ref{f:bricks} 2) and 3) (or the first remark in the proof of Proposition \ref{prop:var-ext-2-silt}) it is easy to deduce that
\[\bar{\mathcal{T}}=\{ X\in\mathrm{mod}\,A \mid \mathrm{Res}^A_BX\in \mathcal{T}  \}.\]
In the same vein, we obtain that if $|G|$ is invertible in $k$, then \[ \mathcal{T}=\{ Y\in\mathrm{mod}\,B \mid \mathrm{Ind}^A_BY\in \bar{\mathcal{T}} \} \]
\end{remark}

\begin{remark} The main results  of \cite{art:KK2020}, \cite{art:KK2021} and \cite{art:K2022} are obtained under the assumption that $k$ is algebraically closed and that any left finite $B$-brick (sometimes any indecomposable $B$-module) is $G$-invariant. With the notation of \ref{f:bricks}, these assumptions imply that $\mathrm{End}_B(S_i)=k$, and  $\mathrm{End}_A(A\otimes_BS_i)^{\mathrm{op}}$ is a twisted group algebra $k_\alpha G$ for some $2$-cocycle $\alpha\in \mathrm{Z}^2(G,k^*)$.

In \cite{art:KK2020} and \cite{art:K2022} it is assumed that $G$ is a $p$-group. This implies that $\mathrm{H}^2(G,k^*)=1$, and that $A\otimes_BY$ is an indecomposable $A$-module whenever $Y$ is an indecomposable $B$-module.

In \cite{art:KK2021} the following assumptions are made:
\begin{itemize}
\item $\mathrm{H}^2(G,k^*)=1$. This implies that any left finite $B$-brick $S_i$ extends to an $A$-brick $\tilde S_i$.
\item  $kG$ is basic. This implies that $A\otimes_B S_i$ is basic.
\end{itemize}
Their results are applied to the case when $B$ is a block with cyclic defect groups.
\end{remark}

\begin{remark} By using Corollary \ref{c:tau-1}, one may obtain dual versions of the statements of Theorem \ref{f:bricks}, involving right finite semibricks, as in\cite{art:KK2021}.
\end{remark}

\section{Wide subcategories as module categories} \label{sec:wide}

We contiunue by using the notations $(M,P)$ and $\mathbf{P}$ as in Section \ref{s:tau-tilt-2-silt}, and the assumption that$B$ is self-injective.

Asai showed in \cite[Theorems 2.27 and 3.15]{art:Asai2020} that certain wide subcategories associated to $\mathbf{P}$ are equivalent to module categories. In this section we study the behaviour of those equivalences with respect to induction and restriction.

\begin{subsec} We will use the following notations.

a) Let \[E=\mathrm{End}_A(A\otimes_BM)^{\textrm{op}}, \qquad F=E_1\simeq\mathrm{End}_B(M)^{\textrm{op}}.\]
Denote by $f_i\in F$ the idempotent endomorphism $M\to M_i\to M$, and let \[f=\sum_{i= l+1}^mf_i.\]

b) Denote $\mathcal{W}=\mathrm{Filt}\,S$, which is the left finite wide subcategory of $\mathrm{mod}\,B$ determined by $S$. Consider the full subcategory
\[\bar{\mathcal{W}}:=\{ U\in \mathrm{mod}\,A \mid \mathrm{Res}^A_BU\in \mathcal{W} \}\]
of $\mathrm{mod}\,A$. Note that by Corollary \ref{c:sep-res},  if $|G|$ is invertible in $k$, then $\bar{\mathcal{W}}=\mathrm{Filt}\,A\otimes_BS$.
\end{subsec}

\begin{theorem} \label{t:wide-ind} Assume that B is self-injective. Let $M$ be a $G$-invariant support $\tau$-tilting $B$-module. The following statements hold.

{\rm1)} The ideal $FfF$ of $F$ is $G$-invariant, and $\mathrm{mod}\,E/EfE$ is a crossed product of $F/FfF$ and $G$.

{\rm2)} $\bar{\mathcal{W}}$ is a  wide subcategory of $\mathrm{mod}\,A$.

{\rm3)} The equivalence  \[\mathrm{Hom}_A(A\otimes_BM,-):\mathrm{Fac}\,A\otimes_BM \to \mathrm{Sub}_{E}\mathrm{D}(A\otimes_BM)\]
restricts to an equivalence between $\bar{\mathcal{W}}$ and $\mathrm{mod}\,E/EfE$, such that the following diagram is commutative:
\[\xymatrix{
	\bar{\mathcal{W}} \ar@<2pt>[rrr]^{\mathrm{Hom}_A(A\otimes_BM,-)} \ar@<2pt>[dd]^{\mathrm{Res}^A_B-} & &
 & \mathrm{mod}\,E/EfE \ar@<2pt>[lll]^{(A\otimes_BM)\otimes_E-} \ar@<2pt>[dd]^{\mathrm{Res}^E_F-} \\ \\
	\mathcal{W}\ar@<2pt>[rrr]^{\mathrm{Hom}_B(M,-)} \ar@<2pt>[uu]^{A\otimes_B-} & & & \mathrm{mod}\,F/FfF \ar@<2pt>[lll]^{M\otimes_F-} \ar@<2pt>[uu]^{E\otimes_F-}
}
\]
\end{theorem}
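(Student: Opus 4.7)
The plan is to establish parts (1), (2), (3) in order, relying for part (3) on the first two together with Asai's Theorem 2.27 applied at both the $B$-level and the $A$-level.

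For (1), I would first show that the index set $\{l+1,\dots,m\}$ defining $f$ is stable under the natural $G$-action on the isomorphism classes of indecomposable summands of $M$. By \ref{ss:indices}(2) this set equals $\{i : S_i = 0\}$, equivalently $\{i : M_i \in \mathrm{Fac}(M/M_i)\}$. Since $M$ is $G$-invariant, $g$-conjugation is a Morita autoequivalence of $\mathrm{mod}\,B$ (see \ref{s:g-conj}) which, by Corollary \ref{c:conj-nu-au}, commutes with $\tau$ and $\nu$, and trivially with $\mathrm{Fac}$ and $\mathrm{rad}$. Hence the set is $G$-stable, forcing the two-sided ideal $FfF$ of $F$ to be $G$-invariant. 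By \ref{ss:basicnot}, $E$ is a crossed product of $F$ by $G$ (as $M$ is basic $G$-invariant), so $EfE = E \cdot FfF = FfF \cdot E$, and the strongly $G$-graded quotient $E/EfE$ inherits the crossed product structure over $F/FfF$.

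Part (2) is immediate from \ref{s:bijections} (which gives the wide-ness of $\mathcal{W}$) together with the exactness of $\mathrm{Res}^A_B$: the full preimage of a wide subcategory under an exact functor is closed under kernels, cokernels, and extensions.

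For (3), the two horizontal equivalences are instances of \cite[Theorem 2.27]{art:Asai2020} applied to $B$ and to $A$ respectively; the crux is to identify the $A$-level target as $\mathrm{mod}\,E/EfE$. This uses part (1) together with the $G$-equivariant identification of the bricks from Theorem \ref{f:bricks}(1), which jointly say that the ``Asai idempotent'' on the $A$-side is the image of $f$ under $F \hookrightarrow E$. Commutativity of the Hom-square is the natural adjunction isomorphism $\mathrm{Hom}_A(A\otimes_BM,U) \simeq \mathrm{Hom}_B(M,\mathrm{Res}^A_B U)$, checked to respect the $F$-action on both sides when the left-hand side carries the natural $E$-action restricted to $F$; commutativity of the tensor square is the standard associativity chain $A\otimes_B(M\otimes_F N) \simeq (A\otimes_BM)\otimes_F N \simeq (A\otimes_BM)\otimes_E(E\otimes_F N)$. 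Finally, I would verify that the restricted functors land in the claimed subcategories: if $U \in \bar{\mathcal{W}}$, then $\mathrm{Res}^A_B U \in \mathcal{W}$, so by Asai at level $B$ the $F$-module $\mathrm{Hom}_B(M, \mathrm{Res}^A_B U)$ is annihilated by $FfF$, which (since $EfE$ is generated by $f$) forces $\mathrm{Hom}_A(A\otimes_BM, U)$ to be annihilated by $EfE$; conversely, for $V \in \mathrm{mod}\,E/EfE$, the tensor-square commutativity together with Asai at level $B$ gives $\mathrm{Res}^A_B((A\otimes_BM)\otimes_E V) \simeq M\otimes_F \mathrm{Res}^E_F V \in \mathcal{W}$. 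The main obstacle is the identification step above: showing precisely that the $A$-side Asai target is $E/EfE$ and not some a priori larger quotient, and verifying the $F$-equivariance of the adjunction in the Hom-square; once these are settled, the remaining assembly is essentially formal.
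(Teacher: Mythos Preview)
Your proposal is essentially correct and the core argument---parts (1), (2), and the ``Finally'' paragraph of (3)---matches the paper's proof almost verbatim. The one point worth noting is your framing of (3): you propose to invoke \cite[Theorem 2.27]{art:Asai2020} at \emph{both} the $B$-level and the $A$-level, and then flag as ``the main obstacle'' the identification of the $A$-side Asai idempotent with the image of $f$. The paper sidesteps this obstacle entirely. It applies Asai's theorem only at the $B$-level (bottom row), and for the top row it instead uses the ambient equivalence $\mathrm{Hom}_A(A\otimes_BM,-):\mathrm{Fac}\,A\otimes_BM \to \mathrm{Sub}_{E}\mathrm{D}(A\otimes_BM)$ from \cite[Proposition 3.2]{art:DIY2019}, and then shows---by exactly the adjunction and tensor-associativity arguments you give in your final paragraph---that this equivalence restricts to $\bar{\mathcal{W}} \leftrightarrow \mathrm{mod}\,E/EfE$. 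So the identification you worry about is never needed; it is rather a \emph{consequence} of the theorem (indeed, the paper uses Theorem~\ref{t:wide-ind} in the proof of Theorem~\ref{t:heart} to conclude a posteriori that $\bar{\mathcal{W}}$ is the left-finite wide subcategory corresponding to $A\otimes_BM$). Once you drop the redundant appeal to Asai at the $A$-level, your argument is the paper's.
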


\begin{proof} 1) Follows immediately by \ref{ss:indices}, the definition of $f$, and by \ref{ss:AeA}.

2) Since $\mathcal{W}:=\mathrm{Filt}\,S$ is a wide subcategory of $\mathrm{mod}\,B$, and the functor $\mathrm{Res}^A_B$ is exact, it is easy to see that $\bar{\mathcal{W}}$ is a  wide subcategory of $\mathrm{mod}\,A$.

3) Since $A\otimes_BM$ is a $\tau$-tilting $A$-module, the functor $\mathrm{Hom}_A(A\otimes_BM,-):\mathrm{Fac}\,A\otimes_BM \to \mathrm{Sub}_{E}\mathrm{D}(A\otimes_BM)$ is an equivalence with inverse $(A\otimes_BM)\otimes_E-$, by \cite[Proposition 3.2]{art:DIY2019}.

The $G$-invariance of $M$ and the exactness of the induction and the restriction functors imply that $A\otimes_B-$ and $\mathrm{Res}^A_B$ restrict to functors between $\mathrm{Fac}\,M$ and $\mathrm{Fac}\,A\otimes_BM$, and also to functors between $\mathcal{W}$ and $\bar{\mathcal{W}}$.

By \ref{ss:AeA} we have that $EfE$ is the $G$-graded ideal of $E$ generated by $FfF$, so it is clear that $E\otimes_F-$ and $\mathrm{Res}^E_F$ restrict to functors between $\mathrm{mod}\,F/FfF$ and $\mathrm{mod}\,E/EfE$.

The $G$-invariance of $M$ implies that $A\otimes_BM\simeq M\otimes_FE$ as $G$-graded $(A,E)$-bimodules. From here and from \ref{l:dual-bimods-Ind}   we immediately get that the functors $E\otimes_F-$ and $\mathrm{Res}^E_F$ restrict to functors between $\mathrm{Sub}_{D}\mathrm{D}(M)$ and $\mathrm{Sub}_{E}\mathrm{D}(A\otimes_BM)$; moreover the commutativity of the diagram also follows.

By \cite[Theorem 2.27]{art:Asai2020}, the bottom row of the diagram is an equivalence. If $X\in\bar{\mathcal{W}}$, then $\mathrm{Hom}_A(A\otimes_BM,X)$ is an $E$-module isomorphic to $\mathrm{Hom}_B(M,\mathrm{Res}^A_B X)$. But $\mathrm{Res}^A_B X\in\mathcal{W}$, so by \cite[Theorem 2.27]{art:Asai2020}, $\mathrm{Hom}_A(A\otimes_BM,U)$ is annihilated by $FfF$, hence it belongs to $\mathrm{mod}\,E/EfE$. Conversely, if $U\in\mathrm{mod}\,E/EfE$, then $(A\otimes_BM)\otimes_E U$ is an $A$-module isomorphic to $(M\otimes_FE)\otimes_E U\simeq M\otimes_F U$, so again by \cite[Theorem 2.27]{art:Asai2020}, its restriction to $B$ belongs to $\mathcal{W}$. Consequently, the top row is also an equivalence.
\end{proof}

\begin{subsec} We introduce another set of notations. Let
\[C=\mathrm{End}_{\mathcal{K}^{\mathrm{b}}(\mathrm{proj}\,A)}(A\otimes_B\mathbf{P})^{\textrm{op}}, \qquad D=C_1\simeq\mathrm{End}_{\mathcal{K}^{\mathrm{b}}(\mathrm{proj}\,B)}(\mathbf{P})^{\textrm{op}},\]
and
 \[C'=\mathrm{End}_{\mathcal{K}^{\mathrm{b}}(\mathrm{inj}\,A)}(A\otimes_B\nu\mathbf{P})^{\textrm{op}}, \qquad D'=C'_1\simeq\mathrm{End}_{\mathcal{K}^{\mathrm{b}}(\mathrm{inj}\,B)}(\nu\mathbf{P})^{\textrm{op}},\]
Let $d_i\in D$ be the idempotent endomorphism $\mathbf{P}\to P_i\to \mathbf{P}$, and let \[d=\sum_{i=l+1}^n d_i.\] Similarly, let $d'_i\in D'$ be the idempotent endomorphism $\nu \mathbf{P}\to \nu P_i\to \nu \mathbf{P}$, and let \[d'=\sum_{i=l}^l d'_i.\]
\end{subsec}

\begin{subsec} Recall that the heart of the intermediate t-structure corresponding to $\mathbf{P}$ is
\[\mathcal{H}=\mathbf{P}[\neq0]^\perp =\{X\in \mathcal{D}^{\mathrm{b}}(B)   \mid \mathrm{Hom}_{\mathcal{D}^{\mathrm{b}}(\mathrm{mod}\,B)} (\mathbf{P}[n],X)=0  \textrm{ for all } n\neq 0   \}.\]
Recall also that $B$-bricks are identified with simple objects of $\mathcal{H}$.

Let  $\bar{\mathcal{H}}$ denote the heart of the t-structure corresponding to $A\otimes_B\mathbf{P}$. Note that because of the adjunction between induction and restriction, we immediately obtain that
\[\bar{\mathcal{H}} = \{ Y\in \mathcal{D}^{\mathrm{b}}(A)    \mid \mathrm{Res}^A_BY\in \mathcal{H} \}.  \]
\end{subsec}

\begin{theorem} \label{t:heart} Assume that B is self-injective. Let $M$ be a $G$-invariant support $\tau$-tilting $B$-module. The following statements hold.

{\rm1)} The ideal $DdD$ of $D$ is $G$-invariant, and $\mathrm{mod}\,C/CdC$ is a crossed product of $D/DdD$ and $G$.

{\rm2)} The equivalence  \[\mathrm{Hom}_{\mathcal{D}^{\mathrm{b}}(\mathrm{mod}\,A)}(A\otimes_B\mathbf{P},-):\bar{\mathcal{H}} \to \mathrm{mod}\,C\]
restricts to an equivalence between $\bar{\mathcal{W}}$ and $\mathrm{mod}\,C/CdC$, such that the following diagram is commutative:
\[\xymatrix{
	\bar{\mathcal{W}} \ar@<2pt>[rrrr]^{\mathrm{Hom}_{\mathcal{D}^{\mathrm{b}}(\mathrm{mod}\,A)}(A\otimes_B\mathbf{P},-)} \ar@<2pt>[dd]^{\mathrm{Res}^A_B-} & & &  & \mathrm{mod}\,C/CdC  \ar@<2pt>[dd]^{\mathrm{Res}^C_D-} \\ \\
	\mathcal{W}\ar@<2pt>[rrrr]^{\mathrm{Hom}_{\mathcal{D}^{\mathrm{b}}(\mathrm{mod}\,B)}(\mathbf{P},-)} \ar@<2pt>[uu]^{A\otimes_B-} & & & & \mathrm{mod}\,D/DdD  \ar@<2pt>[uu]^{C\otimes_D-}
}
\]
\end{theorem}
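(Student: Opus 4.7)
The plan is to closely parallel the proof of Theorem \ref{t:wide-ind}, with the module $M$ and the inclusion $F\subseteq E$ replaced by the silting complex $\mathbf{P}$ and the inclusion $D\subseteq C$, and with the abelian ambient setting upgraded to the derived/heart setting.

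For part (1), I would first invoke Lemma \ref{l:G-inv} to translate $G$-invariance of $M$ into $G$-invariance of $\mathbf{P}$ in the homotopy category. Hence $G$ permutes the isomorphism classes of the indecomposable summands $P_i$ of $\mathbf{P}$, and by Theorem \ref{f:bricks}(1) together with the classification in \ref{ss:indices} the $G$-action preserves the subset of indices $l+1,\dots,n$ defining $d$. As in \ref{ss:AeA}, this yields that $DdD$ is a $G$-invariant ideal of $D$, that $CdC$ is a $G$-graded ideal of $C$, and that $C/CdC$ is strongly $G$-graded with $1$-component $D/DdD$, i.e.\ a crossed product in the sense of \ref{ss:basicnot}.

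For part (2), the bottom horizontal arrow is an equivalence by \cite[Theorem 3.15]{art:Asai2020} applied to $\mathbf{P}$. Applying the same theorem to $A\otimes_B\mathbf{P}$, which lies in $2\textrm{-}\mathrm{silt}\,A$ by Theorem \ref{t:main-ind-2-silt}, gives an equivalence $\bar{\mathcal{H}}\to\mathrm{mod}\,C$, and by part (1) this restricts to an equivalence $\bar{\mathcal{W}}\to\mathrm{mod}\,C/CdC$. Exactness of induction and restriction, combined with the $G$-invariance of $\mathbf{P}$, then guarantees that the four vertical functors restrict to functors between the four corners of the diagram. The commutativity will follow from the natural isomorphism
\[
\mathrm{Res}^C_D\,\mathrm{Hom}_{\mathcal{D}^{\mathrm{b}}(\mathrm{mod}\,A)}(A\otimes_B\mathbf{P},X)\simeq \mathrm{Hom}_{\mathcal{D}^{\mathrm{b}}(\mathrm{mod}\,B)}(\mathbf{P},\mathrm{Res}^A_B X),\qquad X\in\bar{\mathcal{H}},
\]
which in turn comes from the $(A\otimes_B-,\,\mathrm{Res}^A_B)$-adjunction together with the graded bimodule identification $A\otimes_B\mathbf{P}\simeq \mathbf{P}\otimes_D C$ in $\mathcal{K}^{\mathrm{b}}(\mathrm{proj}\,A)$. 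This identification, which is a derived analog of $A\otimes_B M\simeq M\otimes_F E$ used in the proof of Theorem \ref{t:wide-ind}(3), follows from $G$-invariance of $\mathbf{P}$ and from Theorem \ref{t:gr-cover}(1) applied termwise to $\mathbf{P}$. The commutativity of the square involving $A\otimes_B-$ and $C\otimes_D-$ then follows by adjunction from the previous one.

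The hard part will be making the graded bimodule identification $A\otimes_B\mathbf{P}\simeq \mathbf{P}\otimes_D C$ precise as $G$-graded $(A,C)$-bimodule complexes, compatibly with the strongly graded structure on $C$, so that restriction along $D\hookrightarrow C$ corresponds, termwise in the complex, to restriction along $B\hookrightarrow A$. Once this is established, everything else in the diagram is formal from \cite[Theorem 3.15]{art:Asai2020}, the already proven Theorem \ref{t:wide-ind}, and the properties of strongly graded extensions collected in Section \ref{sec:graded}.
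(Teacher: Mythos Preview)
Your proposal is essentially correct and follows the paper's approach closely, but two points deserve comment.

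First, your identification of ``the hard part'' is off. The graded bimodule identification $A\otimes_B\mathbf{P}\simeq \mathbf{P}\otimes_D C$ has nothing to do with Theorem~\ref{t:gr-cover}(1), which concerns projective covers. It follows purely from the $G$-invariance of $\mathbf{P}$ (equivalently, $C$ being a crossed product) together with Dade's structure theorem \cite[Theorem~2.8]{art:Dade1980} for graded modules over strongly graded rings. In fact the paper does not even formulate this bimodule identification; it argues directly that $\mathrm{Hom}_{\mathcal{D}^{\mathrm{b}}(\mathrm{mod}\,A)}(A\otimes_B\mathbf{P},A\otimes_B\mathbf{Y})$ is a $G$-graded $C$-module with $1$-component $\mathrm{Hom}_{\mathcal{D}^{\mathrm{b}}(\mathrm{mod}\,B)}(\mathbf{P},\mathbf{Y})$, and then invokes Dade's theorem to obtain the isomorphism with $C\otimes_D\mathrm{Hom}_{\mathcal{D}^{\mathrm{b}}(\mathrm{mod}\,B)}(\mathbf{P},\mathbf{Y})$. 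Your alternative route---establishing the downward square by adjunction and then deducing the upward square by passing to left adjoints (using that the horizontal arrows are equivalences)---is valid and arguably cleaner, but it is not the paper's argument.

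Second, the sentence ``by part~(1) this restricts to an equivalence $\bar{\mathcal{W}}\to\mathrm{mod}\,C/CdC$'' skips a step. Applying \cite[Theorem~3.15]{art:Asai2020} to $A\otimes_B\mathbf{P}$ tells you that $\bar{\mathcal{H}}\to\mathrm{mod}\,C$ restricts to an equivalence between the left finite wide subcategory associated with $A\otimes_B\mathbf{P}$ and a suitable quotient of $\mathrm{mod}\,C$. To conclude that this subcategory is precisely $\bar{\mathcal{W}}$ you must invoke Theorem~\ref{t:wide-ind}, as the paper does explicitly at this point; part~(1) alone only gives the algebraic structure of $C/CdC$. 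You do cite Theorem~\ref{t:wide-ind} at the end, but it belongs here, before the top horizontal equivalence is asserted.
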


\begin{proof}  1) Follows immediately by \ref{ss:indices}, the definition of $d$, and by \ref{ss:AeA}.

2) It is clear that $A\otimes_B-$ and $\mathrm{Res}^A_B$ induce functors between $\mathcal{H}$ and $\bar{\mathcal{H}}$, and restrict to functors between $\mathcal{W}$ and $\bar{\mathcal{W}}$. Theorem \ref{t:wide-ind} shows that $\bar{\mathcal{W}}$ is indeed the left finite wide subcategory corresponding to $A\otimes_BM$, and hence to $A\otimes_B\mathbf{P}$. Therefore, both horizontal equivalences exist by \cite[Theorem 3.15]{art:Asai2020}.

By adjunction, we clearly have
\[\mathrm{Res}^C_D\mathrm{Hom}_{\mathcal{D}^{\mathrm{b}}(\mathrm{mod}\,A)}(A\otimes_B\mathbf{P},\mathbf{X}) \simeq \mathrm{Hom}_{\mathcal{D}^{\mathrm{b}}(\mathrm{mod}\,B)} (\mathrm{Res}^A_B\mathbf{P},\mathbf{X}) \]
for any $\mathbf{X}\in \mathcal{D}^{\mathrm{b}}(\mathrm{mod}\,A)$. On the other hand, if $\mathbf{Y}\in \mathcal{D}^{\mathrm{b}}(\mathrm{mod}\,B)$, then $\mathrm{Hom}_{\mathcal{D}^{\mathrm{b}}(\mathrm{mod}\,A)}(A\otimes_B\mathbf{P},A\otimes_B \mathbf{Y})$ is a $G$-graded $C$-module with $1$-component $\mathrm{Hom}_{\mathcal{D}^{\mathrm{b}}(\mathrm{mod}\,B)}(\mathbf{P},\mathbf{Y})$, hence
\[\mathrm{Hom}_{\mathcal{D}^{\mathrm{b}}(\mathrm{mod}\,A)}(A\otimes_B\mathbf{P},A\otimes_B \mathbf{Y}) \simeq  C\otimes_D\mathrm{Hom}_{\mathcal{D}^{\mathrm{b}}(\mathrm{mod}\,B)}(\mathbf{P},\mathbf{Y}).\]
This shows that the diagram is commutative, and concludes the proof of the theorem.
\end{proof}

\begin{subsec}The next result is the dual version of Theorem \ref{t:heart}, and the proof is similar, by using Proposition \ref{l:dual-bimods-Ind} 5) and Corollary \ref{c:tau-1}.  Denote $\mathcal{W}'=\mathrm{Filt}\,S'$, which is the right finite wide subcategory of $\mathrm{mod}\,B$ determined by $S$. Consider the full subcategory
\[\bar{\mathcal{W}'}:=\{X \in \mathrm{mod}\,A \mid \mathrm{Res}^A_B X\in \mathcal{W}'\} \]
of $\mathrm{mod}\,A$.
\end{subsec}

\begin{theorem} \label{t:heart-dual} Assume that B is self-injective. Let $M$ be a $G$-invariant support $\tau$-tilting $B$-module. The following statements hold.

{\rm1)} The ideal $D'd'D'$ of $D'$ is $G$-invariant, and $\mathrm{mod}\,C'/C'd'C'$ is a crossed product of $D'/D'd'D'$ and $G$.

{\rm2)} The equivalence  \[\mathrm{DHom}_{\mathcal{D}^{\mathrm{b}}(\mathrm{mod}\,A)}(-,A\otimes_B\nu \mathbf{P}):\bar{\mathcal{H}} \to \mathrm{mod}\,C'\]
restricts to an equivalence between $\bar{\mathcal{W}}'[1]$ and $\mathrm{mod}\,C'/C'd'C'$.
\end{theorem}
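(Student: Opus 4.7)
The strategy is to transport the entire setup from the silting/projective side treated in Theorem \ref{t:heart} to the cosilting/injective side via the Nakayama functor. The essential inputs are Corollary \ref{c:tau-1}, which guarantees that the $G$-invariance of the basic support $\tau$-tilting pair $(M,P)$ passes to the associated support $\tau^{-1}$-tilting pair, and Proposition \ref{l:dual-bimods-Ind}(5), which yields the identification $\nu(A\otimes_B\mathbf{P})\simeq A\otimes_B\nu\mathbf{P}$ in $\mathcal{K}^{\mathrm{b}}(\mathrm{inj}\,A)$. Together these equip $C'$ with a strongly $G$-graded $k$-algebra structure whose $1$-component is $D'$, completely parallel to the pair $(C,D)$ of Theorem \ref{t:heart}.

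For part 1, I would invoke Theorem \ref{f:bricks}(1): since $M$ is $G$-invariant, the $G$-action permutes the set of isomorphism classes of bricks $\{S_1,\dots,S_l\}$, which by \ref{ss:indices}(2) is exactly the set of indices $i$ with $S_i\ne 0$. The corresponding orthogonal idempotents $d'_i\in D'$ for $1\le i\le l$ are then permuted by the graded $G$-action, so the ideal $D'd'D'$ generated by the sum of these idempotents is $G$-invariant. The crossed product claim for $C'/C'd'C'$ over $D'/D'd'D'$ then follows from \ref{ss:AeA} verbatim as in the proof of Theorem \ref{t:heart}(1).

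For part 2, I would apply the injective (cosilting) analogue of \cite[Theorem 3.15]{art:Asai2020}. Over $B$, this asserts that $\mathrm{DHom}_{\mathcal{D}^{\mathrm{b}}(\mathrm{mod}\,B)}(-,\nu\mathbf{P})$ is an equivalence $\mathcal{H}\to\mathrm{mod}\,D'$ which restricts to an equivalence $\mathcal{W}'[1]\to\mathrm{mod}\,D'/D'd'D'$; applied over $A$ to the $2$-silting complex $A\otimes_B\mathbf{P}$ (which is indeed $2$-silting by Theorem \ref{t:main-ind-2-silt}), it gives the ambient equivalence $\bar{\mathcal{H}}\to\mathrm{mod}\,C'$. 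Using the description $\bar{\mathcal{H}}=\{\mathbf{Y}\in\mathcal{D}^{\mathrm{b}}(A)\mid \mathrm{Res}^A_B\mathbf{Y}\in\mathcal{H}\}$ of the ambient heart, the analogous $\bar{\mathcal{W}}'[1]=\{\mathbf{Y}\in\bar{\mathcal{H}}\mid \mathrm{Res}^A_B\mathbf{Y}\in\mathcal{W}'[1]\}$, and the natural isomorphism $\mathrm{Res}^{C'}_{D'}\mathrm{DHom}_A(-,A\otimes_B\nu\mathbf{P})\simeq \mathrm{DHom}_B(\mathrm{Res}^A_B(-),\nu\mathbf{P})$ (which is the cosilting counterpart of the adjunction isomorphism used in Theorem \ref{t:heart}(2) and rests on the Frobenius property of the extension together with Proposition \ref{l:dual-bimods-Ind}), one translates the condition $\mathrm{Res}^A_B\mathbf{Y}\in\mathcal{W}'[1]$ into the condition that the underlying $D'$-module of $\mathrm{DHom}_A(\mathbf{Y},A\otimes_B\nu\mathbf{P})$ is annihilated by $D'd'D'$. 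By the strong grading of $C'$ over $D'$ established in part 1, this amounts precisely to annihilation by $C'd'C'$.

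The main technical point will be setting up the injective/cosilting version of Asai's theorem cleanly and verifying the displayed natural isomorphism with enough care to conclude the restriction in both directions, i.e.\ not only that every object of $\bar{\mathcal{W}}'[1]$ has image in $\mathrm{mod}\,C'/C'd'C'$, but also that every $C'/C'd'C'$-module lifts to an object of $\bar{\mathcal{H}}$ whose restriction to $B$ lies in $\mathcal{W}'[1]$; for this I expect the $G$-graded Morita-style description of $A\otimes_B\nu\mathbf{P}$ as $\nu\mathbf{P}\otimes_{D'}C'$, which is visible from Proposition \ref{l:dual-bimods-Ind}, to suffice.
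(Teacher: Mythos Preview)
Your approach is essentially the paper's: the paper merely states that Theorem \ref{t:heart-dual} is the dual of Theorem \ref{t:heart} and that the proof is similar, invoking Proposition \ref{l:dual-bimods-Ind} 5) and Corollary \ref{c:tau-1}, which is exactly the strategy you outline in more detail. One small slip: the identification $\nu(A\otimes_B\mathbf{P})\simeq A\otimes_B\nu\mathbf{P}$ comes from Proposition \ref{l:dual-bimods-Ind}(3) (the Nakayama statement), not (5) (the $\tau$ statement); part (5) is what feeds into Corollary \ref{c:tau-1} to transfer $G$-invariance to the $\tau^{-1}$ side.
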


\subsection*{Acknowledgement} This research is supported by a grant of the Ministry of Research, Innovation and Digitization, CNCS/CCCDI--UEFISCDI, project number PN-III-P4-ID-PCE-2020-0454, within PNCDI III.

\medskip
The authors are grateful to Ryotaro Koshio and  Yuta Kozakai, whose observations helped to significantly improve the manuscript.



\end{document}